\numberwithin{equation}{section} 
\theoremstyle{definition}
\newtheorem{defi}{Definition}[section]
\newtheorem{ex}[defi]{Example}
\theoremstyle{plain}
\newtheorem{thm}[defi]{Theorem}
\theoremstyle{plain}
\newtheorem*{thm*}{Theorem}
\theoremstyle{plain}
\theoremstyle{plain}
\theoremstyle{plain}
\newtheorem{prop}[defi]{Proposition}
\theoremstyle{plain}
\newtheorem{lemma}[defi]{Lemma}
\theoremstyle{plain}
\newtheorem{cor}[defi]{Corollary}
\theoremstyle{plain}
\theoremstyle{plain}
\newtheorem*{question*}{Question}
\theoremstyle{definition}
\newtheorem{rem}[defi]{Remark}
\theoremstyle{thm}
\definecolor{ao(english)}{rgb}{0.0, 0.5, 0.0}
\DeclareMathOperator{\CH}{\rm CH}
\DeclareMathOperator{\GL}{\rm GL}
\DeclareMathOperator{\SO}{\rm SO}
\DeclareMathOperator{\supp}{\rm supp}
\DeclareMathOperator{\Res}{\rm Res}
\newcommand{\NN}{\mathbb{N}}
\newcommand{\ZZ}{\mathbb{Z}}
\newcommand{\RR}{\mathbb{R}}
\newcommand{\QQ}{\mathbb{Q}}
\newcommand{\CC}{\mathbb{C}}
\DeclareMathOperator{\Gr}{\rm Gr}
\newcommand{\vect}[1]{\boldsymbol{#1}}
\newcommand{\bbcomp}{\overline{X}^{\raisebox{-1pt}{$\scriptscriptstyle BB$}}}
\newcommand{\halfint}{\Lambda}
\newcommand{\primHeegner}[1]{H^{\rm prim}_{#1}}
\newcommand{\seq}[2]{( #1 )_{#2}} 
\newcommand{\projmodn}{\mathcal{D}^+_n}
\DeclareMathOperator{\Vol}{\rm Vol}
\DeclareMathOperator{\vol}{\rm vol}
\DeclareMathOperator{\bigO}{\rm O}
\DeclareMathOperator{\SU}{\rm SU}
\newcommand{\genosub}[2]{\mathcal{G}^{\mathrm{ort}}_{#1}(#2)}
\newcommand{\coneosub}[2]{\mathcal{C}^{\mathrm{ort}}_{#1}(#2)}
\newcommand{\genspec}[1]{\mathcal{G}^{\mathrm{sp}}_{n-2}(#1)}
\newcommand{\conespec}[1]{\mathcal{C}^{\mathrm{sp}}_{n-2}(#1)}
\def\be{\begin{equation}}
\def\ee{\end{equation}}
\def\bes{\begin{equation*}}
\def\ees{\end{equation*}}
\def\ba{\be\begin{aligned}}
\def\ea{\end{aligned}\ee}
\def\bas{\bes\begin{aligned}}
\def\eas{\end{aligned}\ees}
\newcommand{\myblack}{black}
\author{Riccardo Zuffetti}
\title{Cones of orthogonal Shimura subvarieties and equidistribution}
\address{
Fachbereich Mathematik, Technische Universit\"at Darmstadt, Schlossgartenstrasse 7,
D–64289 Darmstadt, Germany\\
}
\email{zuffetti@mathematik.tu-darmstadt.de, riccardo.zuffetti@gmail.com}
\begin{document}
	\maketitle
	\begin{abstract}
	Let~$X$ be an orthogonal Shimura variety, and let~$\coneosub{r}{X}$ be the cone generated by the cohomology classes of orthogonal Shimura subvarieties in~$X$ of dimension~$r$.
	We investigate the asymptotic properties of the generating rays of~$\coneosub{r}{X}$ for large values of~$r$.
	They accumulate towards rays generated by wedge products of the Kähler class of~$X$ and the fundamental class of an orthogonal Shimura subvariety.
	We also compare~$\coneosub{r}{X}$ with the cone generated by the special cycles of dimension~$r$.
	The main ingredient to achieve the results above is the equidistribution of orthogonal Shimura subvarieties.
	\end{abstract}
	\tableofcontents
	
	\emergencystretch 3em 
	
	\section{Introduction}
	In recent years the cones of effective cycles of codimension higher than~$1$ have attracted increasing interest~\cite{fulgconeprojbund} \cite{chencosk} \cite{effcone2cymodc} \cite{lisphericalvar} \cite{coleotch} \cite{effsymprod} \cite{gruhu}.
	A clear description of these cones is available only for few examples, making the overall view far from being well-understood.
	
%
	In~\cite{zufcones} we considered cones generated by the \emph{special cycles} of codimension~$2$ on orthogonal Shimura varieties.
	We proved that the rays generated by these effective cycles accumulate towards infinitely many rays, the latter generating a rational polyhedral cone.
	
	In this article we investigate the properties of the larger cones generated by (the cohomology classes of) the irreducible components of special cycles.
	We usually refer to these irreducible components as \emph{orthogonal Shimura subvarieties}.
	
	The codimension~$1$ analogues of the latter cones were firstly considered by Bruinier and Möller in~\cite[Section~$4$]{brmo}, where they proved that the cone of irreducible components of special divisors is polyhedral.
	The proof relies on showing that the rays spanned by these codimension~$1$ subvarieties accumulate towards a unique internal ray of the cone.
	
	In this article we compute all rays towards which the rays generated by orthogonal Shimura subvarieties of codimension~$2$ may accumulate, together with a partial generalization in higher codimension.
	These results are achieved using the equidistribution properties owned by such subvarieties.
	
	To state our results more precisely, we need to introduce some notation.
	Let~$(V,q)$ be an indefinite rational quadratic space of signature~$(n,2)$.
	We denote by~$G$ the linear algebraic group of isometries~$\SO(V,q)$.
	For every arithmetic lattice~$\Gamma\subset G(\QQ)$, and every maximal compact subgroup~$K$ of~$G(\RR)$, we consider the double quotient~$X=\Gamma\backslash G(\RR)/K$.
	It admits a unique structure of algebraic variety by the Theorem of Baily and Borel.
	We refer to such arithmetic varieties as orthogonal Shimura varieties.
	One of their interesting features is that they admit many algebraic cycles, which may be constructed by immersion in~$X$ of Shimura varieties of smaller dimension.
	
	Let~$(V',q')$ be an indefinite rational quadratic subspace of signature~$(r,2)$ in~$(V,q)$, and let~$H$ be the~$\QQ$-subgroup~$\SO(V',q')$ of~$G$.
	We say that the subvariety~$Z=\Gamma\backslash\Gamma H(\RR) K/K$ of~$X$ is an \emph{orthogonal Shimura subvariety}.
	It is the immersion in~$X$ of an orthogonal Shimura variety arising from~$H$.
	
	We denote by~$\genosub{r}{X}$ the set of cohomology classes in~$H^{2(n-r)}(X,\RR)$ of~$r$-dimensional orthogonal Shimura subvarieties.
	\begin{defi}
	The cone~$\coneosub{r}{X}$ of the~$r$-dimensional orthogonal Shimura subvarieties of~$X$ is the cone in~$H^{2(n-r)}(X,\RR)$ generated by~$\genosub{r}{X}$.	
	\end{defi}
	Following the wording of~\cite[Section~$4$]{zufcones}, we say that a ray~$R$ in~$H^{2(n-r)}(X,\RR)$ is an \emph{accumulation ray of~$\coneosub{r}{X}$ with respect to the set of generators~$\genosub{r}{X}$} if there exists a sequence of pairwise different cohomology classes~$\big([Z_j]\big)_{j\in\NN}$ in~$\genosub{r}{X}$ such that 
	\bes
	\RR_{\ge0}\cdot [Z_j] \longrightarrow R,\qquad\text{when $j\longrightarrow\infty$},
	\ees
	where we denote by~$\RR_{\ge0}\cdot [Z_j]$ the ray generated by~$[Z_j]$.
	If not written otherwise, every accumulation ray of~$\coneosub{r}{X}$ is with respect to~$\genosub{r}{X}$.
	
	The \emph{accumulation cone of~$\coneosub{r}{X}$ with respect to~$\genosub{r}{X}$} is the cone in~$H^{2(n-2)}(X,\RR)$ generated by the accumulation rays of~$\coneosub{r}{X}$.
	
	We denote by~$\omega$ any~$G(\RR)$-invariant Kähler form of the Hermitian symmetric domain~$G(\RR)/K$.
	The form~$\omega$ induces a cohomology class~$[\omega]$ on~$X=\Gamma\backslash G(\RR)/K$, for every arithmetic subgroup~$\Gamma$ of~$G(\QQ)$.
	
	As recalled above, Bruinier and Möller~\cite{brmo} proved that the only accumulation ray of~$\coneosub{n-1}{X}$ is~$\RR_{\ge0}\cdot[\omega]$.
	In higher codimension the geometry of the accumulation rays is more interesting, as shown by the following result.
	\begin{thm}\label{thm;acconeortshsubv}
	Let~$X$ be an orthogonal Shimura variety of dimension~$n$, and let~$r>(n+1)/2$.
	If~$[Z]$ is a non-zero cohomology class arising from an orthogonal Shimura subvariety~$Z$ of dimension~$r'>r$ in~$X$, then the ray~$\RR_{\ge0}\cdot [\omega]^{r'-r}\wedge[Z]$ is an accumulation ray of~$\coneosub{r}{X}$.
	\end{thm}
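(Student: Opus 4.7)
The plan is to realise the target ray as the limit of rays spanned by an increasing sequence of $r$-dimensional orthogonal Shimura subvarieties sitting \emph{inside} $Z$, and then to invoke the equidistribution of orthogonal Shimura subvarieties \emph{within $Z$} rather than within $X$. Since $Z$ arises from a rational quadratic subspace $V' \subset V$ of signature $(r', 2)$, it is itself an orthogonal Shimura variety of dimension $r'$, and the hypothesis $r > (n+1)/2$ together with $r' > r$ places the pair $(r, r')$ in the range to which the equidistribution theorem applies.

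The first step is to produce a sequence $(Z_j)_{j\in\NN}$ of $r$-dimensional orthogonal Shimura subvarieties of $Z$ with $\Vol(Z_j) \to \infty$. These arise from rational quadratic subspaces $W_j \subset V'$ of signature $(r,2)$; by Witt's theorem infinitely many orbits of such subspaces under the arithmetic group occur, and one can enforce the volume growth by varying the discriminant. Each $Z_j$, composed with the inclusion $i\colon Z \hookrightarrow X$, is an $r$-dimensional orthogonal Shimura subvariety of $X$. The equidistribution of orthogonal Shimura subvarieties on $Z$ then yields
\[
\frac{[Z_j]}{\Vol(Z_j)} \longrightarrow c' \cdot [\omega|_Z]^{r'-r} \quad \text{in } H^{2(r'-r)}(Z, \RR),
\]
for some constant $c' > 0$. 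Pushing forward along $i$ and applying the projection formula $i_*\bigl([\omega|_Z]^{r'-r}\bigr) = [\omega]^{r'-r} \wedge [Z]$ gives convergence of the normalised classes in $H^{2(n-r)}(X, \RR)$ to a positive multiple of $[\omega]^{r'-r} \wedge [Z]$. Since $\int_Z \omega|_Z^{r'} = \Vol(Z) > 0$, hard Lefschetz on $Z$ (or an elementary positivity computation) ensures $[\omega]^{r'-r} \wedge [Z] \neq 0$, so the rays $\RR_{\ge 0}\cdot [Z_j]$ converge projectively to $\RR_{\ge 0}\cdot [\omega]^{r'-r} \wedge [Z]$. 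Pairwise distinctness of the classes $[Z_j]$ in $H^{2(n-r)}(X,\RR)$ follows from the pairing with $[\omega]^r$, which recovers the pairwise distinct volumes $\Vol(Z_j)$.

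The main obstacle is the careful application of the equidistribution statement: since $X$ and $Z$ need not be compact, one must either pass to a smooth toroidal compactification or work with classes of test forms (and of cohomology) for which the relevant integrals are defined and convergent, and verify that the weak convergence of currents translates faithfully into convergence of cohomology classes in the finite-dimensional space $H^{2(r'-r)}(Z,\RR)$ that pairs non-trivially with $[Z_j]$. The hypothesis $r > (n+1)/2$ is precisely what places the subvarieties $Z_j$ in the range where the equidistribution theorem for orthogonal Shimura subvarieties can be invoked, so identifying this condition with the range of validity of the equidistribution result is the crucial technical point.
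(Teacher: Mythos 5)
Your overall strategy (produce a sequence of $r$-dimensional orthogonal Shimura subvarieties inside $Z$ that equidistribute, apply the equidistribution-to-cohomology result, identify the limit ray) is the same as the paper's; in particular your Step 1 is essentially the content of Lemma~\ref{lem;exofequidsubvarinZ}, and your push-forward via the projection formula is just a rephrasing of the statement of Theorem~\ref{thm:mainresu2} directly in $H^{2(n-r)}(X,\RR)$. However, there is a genuine gap in your non-vanishing argument, and you mislocate the role of the hypothesis $r>(n+1)/2$.

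You assert that $[\omega]^{r'-r}\wedge[Z]\neq 0$ follows from the positivity of $\int_Z\omega^{r'}$ by hard Lefschetz on $Z$, and that pairwise distinctness of the $[Z_j]$ follows from pairing with $[\omega]^r$. Both claims implicitly use a Poincar\'e-type pairing on $X$, but $X$ is in general a \emph{non-compact} quasi-projective variety, so such a pairing is not available on singular cohomology. This is precisely the point where the hypothesis $r>(n+1)/2$ enters: it guarantees $2(n-r)<n-1$, the degree range in which $H^{s}(X,\CC)\cong IH^{s}(\bbcomp,\CC)$ for the Baily--Borel compactification, and it is on $IH^{\bullet}(\bbcomp,\CC)$ that hard Lefschetz holds and gives injectivity of wedging by $[\omega]^{r'-r}$ from degree $2(n-r')$ to degree $2(n-r)$. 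Your reading of the hypothesis as ``the range in which the equidistribution theorem applies'' is incorrect: the equidistribution statement (Proposition~\ref{prop:imprmotocov} and Theorem~\ref{thm:mainresu2}) places no such upper-half restriction on $r$; what is at stake is the cohomological input needed to conclude that the limit class is non-zero. Without the intersection-cohomology bridge, your positivity/hard-Lefschetz-on-$Z$ argument does not yield $[\omega]^{r'-r}\wedge[Z]\neq 0$ in $H^{2(n-r)}(X,\RR)$, and your proof does not close.

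A secondary point: the paper obtains pairwise distinct non-zero classes $[Z_j]$ not via a pairing, but by combining the non-vanishing of the limit with the divergence of $\Vol(Z_j)$ (Lemma~\ref{lem;voldiverge}), from which one extracts a suitable subsequence; this sidesteps the pairing issue entirely and is the route you should take once the non-vanishing is correctly established.
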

	
	It is well-known that the cohomology classes of compact subvarieties in a Kähler manifold are non-zero.
	This result is not available in the literature for (non-compact) orthogonal Shimura subvarieties; see e.g.~\cite{fm;boundarybe} for the non-vanishing of infinitely many classes of special cycles.
	For this reason, the hypothesis appearing in Theorem~\ref{thm;acconeortshsubv} that the class~$[Z]$ is non-zero is not trivial.
	
	An example in which cohomology classes of orthogonal Shimura subvarieties do not vanish is provided in codimension~$1$ by~\cite[Section~$4$]{brmo}, as we briefly recall.	
	We restrict to unimodular lattices so that we may compare cones of orthogonal Shimura subvarieties with the cones generated by special cycles considered in~\cite{zufcones}.
	
	Let~$L$ be an even lattice\textcolor{\myblack}{, i.e.\ a free~$\ZZ$-module of finite rank equipped with a symmetric~$\ZZ$-valued bilinear form~$(\cdot{,}\cdot)$ such that~$(\lambda,\lambda)\in 2\ZZ$ for all~$\lambda \in L$, of signature~$(n,2)$.
	We assume that~$L$ is \emph{unimodular}, namely that~$L$ equals its dual lattice.}
	If~$X$ is an orthogonal Shimura variety arising from~$G=\SO(L\otimes\QQ)$ and~$\Gamma=\SO^+(L)$, then every orthogonal Shimura subvariety of codimension~$1$ in~$X$ induces a non-zero cohomology class in~$H^2(X,\RR)$.
	
	Under these assumptions, we may refine Theorem~\ref{thm;acconeortshsubv} in codimension~$2$, providing the following complete classification of the accumulation rays of~$\coneosub{n-2}{X}$.
	\begin{cor}\label{cor;acconeortshsubv}
	Let~$X=\SO^+(L)\backslash G(\RR)/K$ be an orthogonal Shimura variety of dimension~$n>5$ arising from an even unimodular lattice~$L$.
	The accumulation rays of~$\coneosub{n-2}{X}$ are~$\RR_{\ge0}\cdot[\omega]^2$ and the rays~$\RR_{\ge0}\cdot[\omega]\wedge[Z]$, for every orthogonal Shimura subvariety~$Z$ of codimension~$1$.
	\end{cor}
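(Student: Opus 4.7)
The plan is to establish both inclusions separately: that each ray listed in the statement is indeed an accumulation ray of $\coneosub{n-2}{X}$ (\emph{existence}), and conversely that every accumulation ray is one of those listed (\emph{classification}).

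For existence, I apply Theorem~\ref{thm;acconeortshsubv} with $r=n-2$: its hypothesis $r>(n+1)/2$ unfolds to the corollary's assumption $n>5$. Choosing $r'=n$ and $Z=X$ immediately produces the ray $\RR_{\ge0}\cdot[\omega]^2$, which is non-zero because $\omega$ is Kähler. Choosing $r'=n-1$ and taking $Z$ to be any codimension-$1$ orthogonal Shimura subvariety produces $\RR_{\ge0}\cdot[\omega]\wedge[Z]$, once one verifies that $[\omega]\wedge[Z]\neq 0$. The non-vanishing of $[Z]$ under the unimodular hypothesis is~\cite[Section 4]{brmo}; the extra wedge with $[\omega]$ should survive by a Hard-Lefschetz-type argument on the Baily-Borel compactification of $X$.

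For the classification, let $(Z_j)_{j\in\NN}$ be a sequence of pairwise distinct codimension-$2$ orthogonal Shimura subvarieties whose rays converge to a ray $R$. Each $Z_j$ is attached to a rational signature-$(n-2,2)$ subspace $V_j'\subset V$, equivalently to its positive-definite rank-$2$ rational orthogonal complement $W_j=(V_j')^\perp$. I split into two cases according to how $(W_j)_j$ degenerates: either (A) after passing to a subsequence all the $W_j$ contain a common rational positive-definite line $\ell\subset V$, or (B) no such persistent common line exists and the planes $W_j$ escape every compact subset of the Grassmannian of positive-definite $2$-planes modulo $\SO^+(L)$. Standard reduction theory for $\SO^+(L)$, under which only finitely many positive-definite rational sublattices of bounded discriminant exist up to the arithmetic action, ensures that these two cases are exhaustive.

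In case (A), each $Z_j$ is contained in the fixed codimension-$1$ orthogonal Shimura subvariety $Z'\subset X$ attached to $\ell^\perp$, and is itself codimension-$1$ in $Z'$. Applying the codimension-$1$ accumulation result of Bruinier and Möller to $Z'$ yields $\RR_{\ge0}\cdot[Z_j]_{Z'}\to\RR_{\ge0}\cdot[\omega|_{Z'}]$ in $H^2(Z',\RR)$, and pushing forward via the embedding $Z'\hookrightarrow X$ and the projection formula gives $R=\RR_{\ge0}\cdot[\omega]\wedge[Z']$. In case (B), the equidistribution of codimension-$2$ orthogonal Shimura subvarieties — the analytic ingredient developed in the body of the paper to prove Theorem~\ref{thm;acconeortshsubv} — forces $R=\RR_{\ge0}\cdot[\omega]^2$. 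The main obstacle will be to make the dichotomy (A)/(B) rigorous, in particular the reduction-theoretic subsequence extraction, and to confirm that the pushforward from $Z'$ is compatible with the passage to the cohomological limit.
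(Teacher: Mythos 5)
Your existence half tracks the paper's argument: you apply Theorem~\ref{thm;acconeortshsubv} with $r=n-2$ (noting $n-2>(n+1)/2$ is exactly $n>5$), take $r'=n$, $Z=X$ for $\RR_{\ge0}\cdot[\omega]^2$ and $r'=n-1$ for $\RR_{\ge0}\cdot[\omega]\wedge[Z]$, and reduce non-vanishing of $[\omega]\wedge[Z]$ to non-vanishing of $[Z]$ via Hard Lefschetz on the Baily--Borel compactification. However, you cite~\cite[Section~4]{brmo} for $[Z]\neq 0$ and leave it at that. The paper does not skip this step: it invokes injectivity of the map $\psi\colon M^{1+n/2}(\RR)^*\to H^2(X,\RR)$ coming from the Kudla--Millson lift, uses Remark~\ref{rem;splitHeegninprinHeegn} to write $\psi^{-1}([\primHeegner{m}])=\sum_{t^2\mid m}\mu(t)\,c_{m/t^2}$, and then evaluates this functional on the Eisenstein series $E_{1+n/2}$ to obtain a manifestly nonzero quantity. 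Since orthogonal Shimura subvarieties of codimension~$1$ are halves of \emph{primitive} Heegner divisors (Lemma~\ref{lemma;basicsspcy}~\ref{item2;basicsspcy}), this computation, not merely the injectivity of $\psi$, is what actually delivers the non-vanishing; your proposal leaves that step as a citation placeholder.

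Your classification half takes a genuinely different and more complicated route, and here there are real gaps. The paper's proof is one step: by Theorem~\ref{thm:mainresu2} (resting on Proposition~\ref{prop:imprmotocov}), any sequence of pairwise-distinct codimension-$2$ orthogonal Shimura subvarieties has a subsequence equidistributing in an orthogonal Shimura subvariety $Z$ of dimension $r'\in\{n-1,n\}$, and the limit ray is read off directly as $\RR_{\ge0}\cdot[\omega]^{r'-(n-2)}\wedge[Z]$. Your dichotomy~(A)/(B), on whether the positive-definite complements $W_j$ persistently share a rational line $\ell$, tries to reproduce this by hand, but it introduces obstacles the paper avoids. In case~(A) you propose to apply the Bruinier--M\"oller codimension-$1$ accumulation result inside $Z'$ (the subvariety attached to $\ell^\perp$) and then push forward along $Z'\hookrightarrow X$. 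Two problems: first, $L\cap\ell^\perp$ is in general \emph{not} unimodular, so the specific results of~\cite{brmo} you rely on do not apply off the shelf to $Z'$ with its own arithmetic group; second, the compatibility of the cohomological limit in $H^2(Z',\RR)$ with pushforward along a singular, non-compact inclusion, and the identification of the pushforward of $[\omega|_{Z'}]$ with a multiple of $[\omega]\wedge[Z']$ in $H^4(X,\RR)$, are nontrivial points you acknowledge but do not resolve. In case~(B), your conclusion still rests on the equidistribution theorem; the dichotomy does not replace it, it only duplicates one of its steps. The clean move is the paper's: apply Proposition~\ref{prop:imprmotocov} and Theorem~\ref{thm:mainresu2} directly, which make your case analysis and the pushforward argument unnecessary.
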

	
	We denote by~$\genspec{X}$ the set of cohomology classes of codimension~$2$ special cycles of~$X$.
	The cone~$\conespec{X}$ of codimension~$2$ special cycles of~$X$ is the cone generated by~$\genspec{X}$ in~$H^{4}(X,\RR)$.
	
	By means of the classification of the accumulation rays of~$\conespec{X}$ in~\cite[Section~$8$]{zufcones}, and the analogous classification for~$\coneosub{n-2}{X}$ provided by Corollary~\ref{cor;acconeortshsubv}, we may deduce the following result.
	We denote by~$M^k_1$ the space of elliptic modular forms of weight~$k$.
	
	\begin{cor}\label{cor;acconeortshsubvfoll}
	Let~$X=\SO^+(L)\backslash G(\RR)/K$ be an orthogonal Shimura variety of dimension~$n>5$ arising from an even unimodular lattice~$L$.
	The accumulation cone of~$\coneosub{n-2}{X}$ equals the accumulation cone of~$\conespec{X}$.
	In particular, the accumulation cone of~$\coneosub{n-2}{X}$ is pointed, rational, polyhedral and of dimension~$\dim M^{1+n/2}_1$.
	\end{cor}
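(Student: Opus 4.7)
The strategy is to directly compare the two explicit classifications. By Corollary~\ref{cor;acconeortshsubv}, the accumulation cone of~$\coneosub{n-2}{X}$ is generated by~$\RR_{\ge 0}\cdot [\omega]^2$ together with the rays~$\RR_{\ge 0}\cdot [\omega]\wedge [Z]$, with~$Z$ ranging over codimension-1 orthogonal Shimura subvarieties of~$X$. In parallel, \cite[Section~$8$]{zufcones} describes the accumulation cone of~$\conespec{X}$ in analogous terms and establishes its structural properties (pointedness, rationality, polyhedrality, and dimension~$\dim M^{1+n/2}_1$). Thus the bulk of the proof consists of matching the generators provided by the two classifications.

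First I would establish the inclusion of the accumulation cone of~$\conespec{X}$ into the accumulation cone of~$\coneosub{n-2}{X}$. This is immediate from decomposing each special divisor class~$[H]$ into a non-negative integer combination~$\sum_i c_i [Z_i]$ of its irreducible components, each being a codimension-1 orthogonal Shimura subvariety of~$X$. Wedging with~$[\omega]$ then rewrites every generating ray of the accumulation cone of~$\conespec{X}$ as a non-negative combination of generating rays of the accumulation cone of~$\coneosub{n-2}{X}$.

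For the reverse inclusion, I would exploit the unimodular hypothesis, under which each codimension-1 orthogonal Shimura subvariety coincides with an irreducible component of some special divisor. Combined with the explicit classification in \cite[Section~$8$]{zufcones}, the rays~$\RR_{\ge 0}\cdot [\omega]\wedge [Z]$ already appear among the accumulation rays of~$\conespec{X}$, thereby equating the two sets of generating rays and hence the two cones.

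The ``in particular'' statement then transfers automatically: pointedness, rationality, polyhedrality, and dimension~$\dim M^{1+n/2}_1$ of the accumulation cone of~$\conespec{X}$ from \cite[Section~$8$]{zufcones} carry over to the accumulation cone of~$\coneosub{n-2}{X}$ via the equality just proved. The main obstacle is making the reverse inclusion precise: it is where the unimodularity hypothesis is essential, since the description of the accumulation rays of~$\conespec{X}$ in terms of individual orthogonal Shimura subvariety components (rather than merely Heegner divisor classes) relies on the triviality of the discriminant group of~$L$; without it, the cone generated by Heegner divisors could be strictly smaller than the cone generated by all components.
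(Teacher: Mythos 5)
Your proposal takes essentially the same route as the paper: the published proof likewise deduces the equality by directly matching the classification of accumulation rays of~$\coneosub{n-2}{X}$ from Corollary~\ref{cor;acconeortshsubv} against that of~$\conespec{X}$ from~\cite[Corollary~$8.3$]{zufcones} (the identification resting on each codimension-$1$ orthogonal Shimura subvariety being half a primitive Heegner divisor), and then imports pointedness, rationality, polyhedrality and the dimension count from~\cite[Section~$6$]{zufcones}. The one detail the paper records that you omit is that the classification in~\cite{zufcones} is stated in the Chow group~$\CH^2(X)\otimes\RR$ and must be noted to remain valid in cohomology.
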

	
	Theorem~\ref{thm;acconeortshsubv} and Corollary~\ref{cor;acconeortshsubv} are proved by means of equidistribution results on orthogonal Shimura subvarieties, as we briefly illustrate.
	The Kähler form~$\omega$ induces probability measures~$\nu_X$ and~$\nu_Z$, respectively on~$X$ and on any orthogonal Shimura subvariety~$Z$.
	Let~$\seq{Z_j}{j\in\NN}$ be a sequence of \textcolor{\myblack}{pairwise different} orthogonal Shimura subvarieties of positive dimension~\textcolor{\myblack}{$r$}.
	In Section~\ref{sec;seqincoho} we show that there exist an orthogonal Shimura subvariety~$Z$ of dimension~$r'>r$ in~$X$ and a subsequence~$\seq{Z_s}{s}$, such that the subvarieties~$Z_s$ \emph{equidistribute} in~$Z$, i.e.\ the sequence of probability measures~$\nu_{Z_s}$ weakly converges to~$\nu_Z$.
	This is a refinement of~\cite[\textcolor{\myblack}{Th\'eor\`eme}~$1.2$]{cloull;equid1}.
	It implies by~\cite{kozmau} and~\cite{tatho} that
	\be\label{eq;mainresu}
	\frac{[Z_j]}{\Vol(Z_j)}\xrightarrow[\,j\to\infty\,]{} \frac{r!}{r'!}\cdot[\omega]^{r'-r}\wedge\frac{[Z]}{\Vol(Z)}\qquad\text{in $H^{2(n-r)}(X,\QQ)\cap H^{n-r,n-r}(X)$}.
	\ee
%
	
	\textcolor{\myblack}{The equidistribution results on orthogonal Shimura subvarieties are proved in Section~\ref{sec:equidres}.
	In Section~\ref{sec;seqspecialcycles} we apply~\eqref{eq;mainresu} to deduce Theorem~\ref{thm;acconeortshsubv} and Corollary~\ref{cor;acconeortshsubv}.
	A comparison of the cones~$\coneosub{n-2}{X}$ and~$\conespec{X}$ is also given.}
	
	\subsection*{Acknowledgments}
	We are grateful to Martin Möller for suggesting this project and for his encouragement.
	His office door was always open for discussions.
	We would like to thank Jan Bruinier, Jens Funke \textcolor{\myblack}{and Salim Tayou} for useful conversations on this topic.
	\textcolor{\myblack}{We also wish to thank the anonymous referee for a thorough review and helpful suggestions on the final version of the present paper.}
	This work is a result of our PhD~\cite{zufthesis}, which was founded by the LOEWE research unit ``Uniformized Structures in Arithmetic and Geometry'', and by the Collaborative Research Centre TRR~326 ``Geometry
and Arithmetic of Uniformized Structures'', project number~444845124.
	\section{Orthogonal Shimura varieties and special subvarieties}\label{sec;Shimvar}
	Throughout this paper we denote by~$G$ the linear algebraic group of isometries~$\SO(V,q)$ associated to some indefinite rational quadratic space~$(V,q)$ of signature~$(n,2)$.
	The Hermitian symmetric domain associated to~$G$ is the Kähler manifold arising as the quotient~$\widetilde{X}=G(\RR)/K$, for some maximal compact subgroup~$K$ of~$G(\RR)$.
	Up to isomorphism, the choice of~$K$ does not affect~$\widetilde{X}$.
	For this reason, we may suppose~$K$ to be the standard maximal compact subgroup ${\rm S}\big({\rm O}(n)\times {\rm O}(2)\big)$.
	It is well-known that~$\widetilde{X}$ can be realized as the Grassmannian $\Gr(V)$ of negative definite $2$-panes in~$V\otimes\RR$.
	
	An \emph{arithmetic subgroup} $\Gamma$ of $G(\QQ)$ is a subgroup of $G(\QQ)\cap G(\RR)^+$, where~$G(\RR)^+$ is the connected component of the identity of $G(\RR)$ with respect to the Euclidean topology, such that $\Gamma\cap G(\ZZ)$ is of finite index in $G(\ZZ)$ and $\Gamma$.
	\begin{defi}
	A \emph{(connected) orthogonal Shimura variety} is the double quotient~$X=\Gamma\backslash G(\RR)/K$ arising from some arithmetic lattice $\Gamma$ of $G(\QQ)$.
	\end{defi}
	By the Theorem of Baily and Borel, there exists a unique algebraic structure on any quotient~$X=\Gamma\backslash G(\RR)/K$ as above.
	With such a structure, the variety~$X$ is either projective or quasi-projective. The former case can happen only when $n<3$ \textcolor{\myblack}{by Meyer's Theorem; see e.g.~\cite[Page~$4$]{Kudla;speccycl}.}
	\begin{rem}
	Orthogonal Shimura varieties are usually defined with respect to congruence subgroups.
	Since the results on equidistribution that we are going to apply in this paper work for more general arithmetic subgroups as well, we do not require~$\Gamma$ to be of congruence.
	\end{rem}
	In this paper we deal with certain \emph{special subvarieties} of orthogonal Shimura varieties, defined below.
	The terminology comes from the fact that these subvarieties can be considered as immersions in~$X$ of Shimura varieties of smaller dimension; see e.g.~\cite[Section~$3.3$]{ull;equidsurv}.
	\begin{defi}\label{def;specialsubvar}
	Let $X=\Gamma\backslash G(\RR)/K$ be an orthogonal Shimura variety.
	If $H$ is a~$\QQ$-algebraic subgroup of~$G$ which induces an inclusion of Hermitian symmetric domains
	\bes
	\widetilde{Y}=H(\RR)/(K\cap H(\RR))\hookrightarrow G(\RR)/K,
	\ees
	we say that the immersion of $(\Gamma\cap H(\RR)^+)\backslash\widetilde{Y}$ in $X$ is a \emph{special subvariety}.
	
	If a special subvariety $Y$ of $X$ arises from a~$\QQ$-subgroup $H$ of $G$ such that $H=\SO(V',q')$, for some rational quadratic subspace~$(V',q')$ of signature~$(n',2)$ in~$(V,q)$, where~$n'\ge 1$, we say that~$Y$ is an \emph{orthogonal Shimura subvariety}.
	\end{defi}
	\textcolor{\myblack}{As clarified below, the orthogonal Shimura subvarieties of~$X$ are not the only possible Shimura subvarieties of~$X$.}
	\begin{rem}\label{rem;fromFiori}
	\textcolor{\myblack}{\textcolor{\myblack}{Fiori~\cite{fiori} classified the Shimura subvarieties of~$X$ by proving that they may arise only from (restriction of scalars of) orthogonal or unitary algebraic groups.}
	We call the former kind \emph{Shimura subvarieties of orthogonal type} of~$X$.
	They are the Shimura subvarieties which arise from a~$\QQ$-subgroup~$H$ of~$G$ of the form $H=\Res_{F/\QQ}\SO(U,q_U)$, for some quadratic space~$(U,q_U)$ defined over a totally real extension~$F$ of~$\QQ$, of signature~$(\ell,2)$ at one place and positive definite at all other places.}
	By \cite[Construction 3.5]{fiori}, the inclusion of groups~${H\hookrightarrow G=\SO(V,q)}$ factors \textcolor{\myblack}{through} base change to $\RR$ as follows, with surjective projection onto the first factor~$\SO(\ell,2)$.
	\bes
	\begin{tikzcd}[column sep=small]
	H(\RR)\arrow{rr}\arrow{dr} && \SO(n,2)\cong G(\RR)\\
	& {\SO(\ell,2)\times\SO(\ell+2)\times\cdots\times\SO(\ell+2)}\arrow[ur] & 
	\end{tikzcd}
	\ees
	The orthogonal Shimura subvarieties of Definition \ref{def;specialsubvar} are the special subvarieties of orthogonal type as above, with $F=\QQ$.
	\end{rem}
	
	\begin{rem}\label{rem;fioridifShimsubvar}
	\textcolor{\myblack}{There are special subvarieties of~$X$, not of orthogonal type, which arise from unitary subgroups of~$G$}; see~\cite{fiori}.
	The Hermitian symmetric domain arising from~$\SU(m,1)$ is the \emph{complex hyperbolic $m$-space}~$\mathbb{B}^m$.
	Since all Hermitian symmetric domains contained in~$\mathbb{B}^m$ are complex hyperbolic subspaces, as proved e.g.~in~\cite[Proposition~$2.3$]{bfms}, the special subvarieties of orthogonal type in~$X$ are the only special subvarieties which may contain other special subvarieties of orthogonal type.
	\end{rem}
	\begin{lemma}\label{lemma:gaghk}
	Let $X=\Gamma\backslash G(\RR)/K$ be an orthogonal Shimura variety, and let~$H$ be the group of isometries~$\SO(W,q_W)$ of some rational quadratic subspace $(W,q_W)$ of signature~$(r,2)$ in~$(V,q)$, with $1\le r\le n$.
	Every orthogonal Shimura subvariety of $X$ of dimension $r$ is of the form~${\Gamma\backslash\Gamma gH(\RR)K/K}$ for some~$g\in G(\RR)$.
	\end{lemma}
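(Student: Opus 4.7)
The plan is to identify $Y$ via its Hermitian symmetric subdomain in $\widetilde{X} = G(\RR)/K$, and then produce the translating element $g$ using Witt's extension theorem. Under the standard identification $\widetilde{X} \cong \Gr(V)$, let $[K]$ correspond to a fixed negative definite $2$-plane $z_0 \subset V \otimes \RR$. By Definition~\ref{def;specialsubvar}, $Y$ arises from a $\QQ$-subgroup $H' = \SO(V',q')$, where $(V',q')$ is a rational quadratic subspace of $(V,q)$ of signature $(r,2)$. The condition that $K \cap H'(\RR)$ be a maximal compact subgroup of $H'(\RR)$ is equivalent to $z_0 \subset V' \otimes \RR$, and consequently the Hermitian symmetric subdomain attached to $Y$ is
\bes
\widetilde{Y} \;=\; H'(\RR) \cdot [K] \;=\; \Gr(V') \;\subset\; \Gr(V),
\ees
namely the set of negative $2$-planes contained in $V' \otimes \RR$. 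The analogous subdomain for the reference group $H = \SO(W,q_W)$ is $\Gr(W)$.

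The core step is Witt's extension theorem for the real quadratic space $V \otimes \RR$. Both $W \otimes \RR$ and $V' \otimes \RR$ are non-degenerate subspaces of signature $(r,2)$, hence mutually isometric, and any such isometry extends to an element of $\bigO(V,q)(\RR)$. Composing with a suitable reflection in $V \otimes \RR$ (which is available since $\dim V = n+2 \geq 3$), one arranges the extension to have determinant $1$, producing an element $g \in G(\RR) = \SO(V,q)(\RR)$ satisfying $g(W \otimes \RR) = V' \otimes \RR$.

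Because $g$ is an isometry of $V \otimes \RR$, we have $g \cdot \Gr(W) = \Gr\bigl(g(W \otimes \RR)\bigr) = \Gr(V') = \widetilde{Y}$, which is precisely the equality $gH(\RR) \cdot [K] = H'(\RR) \cdot [K]$ of subsets of $\widetilde{X}$. Projecting to $X = \Gamma \backslash \widetilde{X}$ yields $\Gamma \backslash \Gamma g H(\RR) K / K = Y$, as claimed. The main work lies in Witt's theorem together with the small adjustment to ensure $g \in G(\RR)$; the rest is a bookkeeping translation between the group-coset and subdomain descriptions of the same subset of $X$.
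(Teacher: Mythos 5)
Your argument is essentially identical to the paper's: realize $\widetilde{X}$ as $\Gr(V)$, identify the Hermitian symmetric subdomain attached to $Y$ with $\Gr(V')$, use Witt's extension theorem to produce an isometry of $V\otimes\RR$ sending $W\otimes\RR$ to $V'\otimes\RR$, correct the determinant by a reflection, and push the resulting coset identity down to $X$. The only difference is cosmetic — the paper is slightly more explicit that the correcting reflection should be taken with respect to a hyperplane containing $W\otimes\RR$, so that the image condition $g(W\otimes\RR)=V'\otimes\RR$ is preserved, whereas you leave this as "a suitable reflection"; the substance of the proof is the same.
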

	\begin{proof}
	Let $\widetilde{X}=G(\RR)/K$ be the Hermitian symmetric domain attached to $G$.
	We realize~$\widetilde{X}$ as the Grassmannian~$\Gr(V)$ of negative definite $2$-planes in~$V_\RR=V\otimes\RR$.
	Let $Z$ be an orthogonal Shimura subvariety of~$X$ of dimension~$r$, and let~$H'=\SO(V',q')$ be the~$\QQ$-algebraic subgroup of~$G$ such that~$Z$ is the immersion in~$X$ of~$\Gamma_{H'}\backslash H'(\RR)/K_{H'}$, where~$\Gamma_{H'}=\Gamma\cap H'(\RR)^+$, $K_{H'}=K\cap H'(\RR)$, and~$(V',q')$ is a rational quadratic subspace of signature~$(r,2)$ in~$(V,q)$.	
	The Hermitian symmetric domain~$\widetilde{Z}=H'(\RR)/K_{H'}$ associated to~$H'$ embeds into~$\widetilde{X}$, and it may be realized as the Grassmannian~$\Gr(V')$.
	
	The real quadratic subspaces~$W_\RR$ and~$V'_\RR$ of~$V_\RR$ have the same dimension and signature, hence there exists an isometry~${f\colon W_\RR\to V'_\RR}$.
	By Witt's Theorem, the isometry~$f$ extends to an isometry~${g\in\bigO(V_\RR)}$ such that~${g|_{W_\RR}=f}$.
	Up to composing~$g$ with a reflection with respect to a hyperplane of~$V_\RR$ containing~$W_\RR$, we may assume that~${g\in G(\RR)}$.
	Since~$g$ acts on~$\Gr(V)$ mapping~$\Gr(W)$ to~$\Gr(V')$, we deduce that~${gH(\RR)/K_H=H'(\RR)/K_{H'}}$.
	If we consider the immersion in $X$ of $gH(\RR)/K_H$, we deduce that
	\bes
	\Gamma\backslash\Gamma gH(\RR)K/K=\Gamma\backslash\Gamma H'(\RR)K/K=Z.\qedhere
	\ees
	\end{proof}
	Following the wording of~\cite{cloull;equid1}, we introduce the so-called strongly special subvarieties.
	As we will see in~Section~\ref{sec:equidres}, they have good equidistribution properties in~$X$.
	\begin{defi}\label{def;stronglyspec}
	A special subvariety of an orthogonal Shimura variety is said to be \emph{strongly special} if it arises from a semisimple $\QQ$-subgroup $H$ that is not contained in any proper parabolic $\QQ$-subgroup of $G$.
	\end{defi}
	We conclude this section by showing that every orthogonal Shimura subvariety is strongly special.
	
	\begin{prop}\label{prop;everyosv>2isstsp}
	Let $X$ be an orthogonal Shimura variety.
	Every orthogonal Shimura subvariety of~$X$ is strongly special.
	\end{prop}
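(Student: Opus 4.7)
The group $H=\SO(V',q')$ attached to an orthogonal Shimura subvariety is automatically a semisimple $\QQ$-subgroup of $G=\SO(V,q)$: by Definition~\ref{def;specialsubvar} the subspace $V'$ has signature $(r,2)$ with $r\geq 1$, so it is nondegenerate of dimension at least three, and $\SO$ of such a space is semisimple. Hence only the non-parabolic condition of Definition~\ref{def;stronglyspec} needs to be verified, and the plan is to reduce this verification to a statement about isotropic subspaces of~$V$.

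The $\QQ$-parabolic subgroups of $G=\SO(V,q)$ are precisely the stabilizers of $\QQ$-rational isotropic flags in~$V$, so if $H$ were contained in a proper parabolic $\QQ$-subgroup of $G$ then $H$ would preserve some non-zero totally isotropic $\QQ$-subspace $I\subseteq V$. I would therefore show that no such~$I$ can exist. To this end I would exploit the orthogonal decomposition $V=V'\oplus V'^{\perp}$ viewed as a decomposition of $H$-modules: $H$ acts via the standard representation on $V'$ and trivially on $V'^{\perp}$. Since the standard representation of $\SO(V',q')$ on a nondegenerate quadratic $\QQ$-space of dimension at least three is irreducible, and is non-isomorphic to the trivial representation, every $H$-stable subspace $I$ of $V$ splits as $I=I_1\oplus I_2$ with $I_1\in\{0,V'\}$ and $I_2\subseteq V'^{\perp}$.

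The two cases are ruled out as follows. If $I_1=V'$, then $I$ contains vectors of positive norm (since $V'$ has signature $(r,2)$ and is not totally isotropic), so $I$ cannot be totally isotropic. If $I_1=0$, then $I\subseteq V'^{\perp}$, and since $V'^{\perp}$ is positive definite the only isotropic subspace it contains is~$\{0\}$. In both cases the non-zero isotropic assumption on $I$ is contradicted, which yields the claim.

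The main delicate point is the $\QQ$-irreducibility of the standard representation, particularly in the minimal case $r=1$ with $\dim V'=3$. This is best handled by base change to $\RR$: any $\QQ$-invariant subspace $W\subseteq V'$ gives an $H(\RR)$-invariant subspace $W\otimes\RR\subseteq V'\otimes\RR$, and the irreducibility of the standard representation of $\SO(r,2)(\RR)$ for $r\geq 1$ is classical. Once this is in place, the rest is a direct application of the structure theory of $\QQ$-parabolic subgroups of orthogonal groups, and the proposition follows.
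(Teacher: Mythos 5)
Your proof is correct, and it takes a genuinely different (and cleaner) route than the paper's. The paper also reduces the claim to showing that $H$ stabilizes no non-zero totally isotropic $\QQ$-subspace $F_1$ of $V$, but from there it argues by hand: it first uses the bound on the Witt index of $(V,q)$ to restrict to $\dim F_1\in\{1,2\}$, then an orbit-counting argument to reduce to the case $F_1\cap V'=\{0\}$, and finally an explicit computation with the projections $\pi_{V'}$, $\pi_{V'^\perp}$ and a suitable reflection $h\in H$ exhibiting a vector $u\in F_1$ with $h(u)\notin F_1$. Your isotypic-decomposition argument replaces all of this at once: since $V=V'\oplus V'^{\perp}$ with $V'$ a non-trivial irreducible $H$-module and $V'^{\perp}$ a trivial one, every $H$-stable subspace either contains $V'$ (hence is not totally isotropic, as $V'$ is non-degenerate of signature $(r,2)$) or lies in the positive definite space $V'^{\perp}$ (hence contains no non-zero isotropic vector). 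This removes the need for the Witt-index bound, the case split on $\dim F_1$, and the orbit argument, and yours is the only one of the two proofs that explicitly records the semisimplicity of $H$ required by Definition~\ref{def;stronglyspec}. The two standard inputs you rely on --- the classification of $\QQ$-parabolics of $\SO(V,q)$ as stabilizers of rational isotropic flags, and the irreducibility of the standard representation (transferred from $\RR$ to $\QQ$ via Zariski density of $H(\QQ)$ in $H$) --- are both legitimate, and the first is exactly the reference the paper itself invokes; what your approach trades away is the elementary, self-contained flavor of the paper's explicit construction.
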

	\begin{proof}
	Let~$(V,q)$ be a rational quadratic space of signature $(n,2)$ such that~$G$ equals $\SO(V,q)$, and such that $X=\Gamma\backslash G(\RR)/K$ for some arithmetic subgroup~$\Gamma$ of~$G$.
	 Let $Z$ be an orthogonal Shimura subvariety of $X$ of dimension~$r>0$.
	 It arises from a subgroup~${H=\SO(V',q')}$ of~$G$, for some rational quadratic subspace $(V',q')$ of signature $(r,2)$ in~$(V,q)$.
	We may consider~$H$ as a subgroup of~$G$ via the inclusion given by extending every isometry in $\SO(V',q')$ as the identity over $V'^\perp$.
	Equivalently, the group~$H$ is identified with the pointwise stabilizer of~$V'^\perp$ with respect to the action of~$\SO(V,q)$.
	
	Since $\dim(V)\ge 3$, the parabolic $\QQ$-subgroups of~$G$ are stabilizer subgroups of isotropic flags in~$V$; see e.g.~\cite[Theorem~T.$3.9$]{con;AgGrII}.
	We recall that a flag~$F$ in~$V$ is an increasing chain of non-zero proper subspaces of~$V$, denoted as
	\bes
	F=\{F_1\subsetneqq \dots \subsetneqq F_m\},\quad\text{for some $m>0$.}
	\ees
	A flag $F$ is said to be \emph{isotropic} if each $F_j$ is totally isotropic in $V$.
	We say that a subgroup of $G$ stabilizes the flag $F$ if it preserves every subspace $F_j$, where $j=1,\dots,m$.
	
	Suppose that $Z$ is not strongly special.
	This means that there exists a parabolic~$\QQ$-subgroup~$P$ of~$G$ containing~$H$.
	As previously remarked, every parabolic subgroup of~$G$ is the stabilizer of an isotropic flag of~$V$.
	We denote by $F$ the isotropic flag stabilized by~$P$.
	Since the Witt index of $(V,q)$ is at most~$2$, the maximal isotropic subspaces of $V$ have dimension at most $2$.
	Therefore, the isotropic flag~$F$ is either of the form~${F=\{F_1\subsetneqq F_2\}}$, or~$F=\{F_1\}$ with~$\dim(F_1)=1,2$.
	
	To conclude the proof it is enough to show that~$H$ does not stabilize any totally isotropic subspace $F_1\subset V$ of dimension $1$ or $2$.
	\textcolor{\myblack}{There are only finitely many orbits under the action of~$\SO(V',q')$ of the proper isotropic subspaces of~$V'$ of the same dimension as~$F_1$.}
	The orbits are actually at most~$2$ by~\cite[Proposition~T.$3.7$]{con;AgGrII}.
	Since whenever~$(V',q')$ is isotropic there is an infinite number of proper isotropic subspaces of~$V'$, we may assume that~$F_1\cap V'=\textcolor{\myblack}{\{0\}}$. 
	
	We begin with the case of~$\dim(F_1)=1$.
	Let $u$ be a basis vector of $F_1$, and let $\pi_{V'}$ (resp.~$\pi_{V'^\perp}$) be the projection on the first (resp.\ second) factor arising from the orthogonal decomposition $V=V'\oplus V'^\perp$.
	Since $u=\pi_{V'}(u)+\pi_{V'^\perp}(u)$ and $q(u)=0$, then
	\bes
	0=q\big(\pi_{V'}(u)\big)+q\big(\pi_{V'^\perp}(u)\big).
	\ees
	The orthogonal complement $(V'^\perp,q|_{V'^\perp})$ is a rational quadratic subspace of $V$ of positive signature.
	Since we suppose $u\notin V'$, then $\pi_{V'^\perp}(u)\neq 0$ and~$q(\pi_{V'^\perp}(u))>0$, hence~$q(\pi_{V'}(u))<0$.
	Since there exists~$h\in H$ such that~$h(\pi_{V'}(u))$ is not a scalar multiple of~$\pi_{V'}(u)$, as one can show using reflections by suitable vectors which are not orthogonal to~$\pi_{V'}(u)$, we deduce that
	\bes
	h(u)=h(\pi_{V'}(u))+h(\pi_{V'^\perp}(u))=
	h(\pi_{V'}(u))+\pi_{V'^\perp}(u),
	\ees
	hence $h(u)$ is not a scalar multiple of $u$.
	That is, $h(u)\notin F_1$.
	
	The case $\dim(F_1)=2$ is analogous.
	Every $u\in F_1$ is such that $\pi_{V'}(u)$ lies in a negative definite quadratic subspace $W$ of $V'$ of dimension at most $2$.
	Let $h\in H$ be such that it maps $W$ to a different negative-definite subspace of $V'$.
	Then some~$u\in F_1$ is such that~$h(u)\notin F_1$.
	\end{proof}
	\section{Equidistribution results}\label{sec:equidres}
	Clozel and Ullmo~\cite{cloull;equid1} proved that any sequence of probability measures associated to strongly special subvarieties admits a convergent subsequence, and that the limit of such a subsequence is the probability measure of a strongly special subvariety; see~\cite[\textcolor{\myblack}{Th\'eor\`eme}~$1.2$]{cloull;equid1}.
	We refine \textcolor{\myblack}{such a result} in the case of orthogonal Shimura subvarieties, which are strongly special as proved in Proposition~\ref{prop;everyosv>2isstsp}, showing that also the limit measure is associated to an orthogonal Shimura subvariety;
	see Proposition~\ref{prop:imprmotocov}.

	We fix a group of \textcolor{\myblack}{isometries} $G=\SO(V,q)$ for some rational quadratic space $(V,q)$ of signature $(n,2)$, where~$n\ge 3$, a compact maximal subgroup~$K$ of $G(\RR)$, and an arithmetic subgroup~$\Gamma$ of~$G(\QQ)$.
	
	
	\subsection{Measures on orthogonal Shimura subvarieties}\label{subsec:measureonconcy}
	
	Any~$G(\RR)$-invariant Kähler metric on the symmetric domain~$\widetilde{X}=G(\RR)/K$ is a constant multiple of the metric arising from the Killing form of the Lie algebra of~$G(\RR)$.
	We choose one of those metrics, denote by~$\vol$ the associated volume form, and by~$\omega$ its induced Kähler form.
	By Wirtinger's Theorem, the volume form~$\omega^n$ is such that $\vol=\omega^n/n!$.
	Let $\mathcal{F}_{\widetilde{X}}$ be a fundamental domain of $\widetilde{X}$ with respect to the action of $\Gamma$.
	
	The restriction $\vol|_{\mathcal{F}_{\widetilde{X}}}$ induces a $G(\RR)$-invariant Kähler metric on $X$ such that $\Vol(\mathcal{F}_{\widetilde{X}})$ is finite.
	We denote by~$\nu_{\widetilde{X}}$ the normalized measure on $\widetilde{X}$ induced by the volume form
	\bes
	\frac{\vol}{\Vol(\mathcal{F}_{\widetilde{X}})}=\frac{\omega^n}{n!\Vol(\mathcal{F}_{\widetilde{X}})},
	\ees
	and by $\nu_X$ the probability measure induced on $X$ by restricting~$\nu_{\widetilde{X}}$ to~$\mathcal{F}_{\widetilde{X}}$.

	Let $Z$ be an orthogonal Shimura subvariety of $X$ of dimension $r\ge3$.
	Let~$H=\SO(V',q')$ be the subgroup of~$G$ associated to some subspace~$(V',q')$ of~$(V,q)$, such that~$Z$ is the immersion in~$X$ of the orthogonal Shimura variety~${Z'=\Gamma_H\backslash H(\RR)/K_H}$, where $\Gamma_H=\Gamma\cap H(\RR)^+$ and $K_H=K\cap H(\RR)$.
	We may rewrite such an~$r$-dimensional immersion as~${Z=\Gamma\backslash\Gamma H(\RR) K/K}$.
	We denote by~$\nu_Z$ the push-forward of the measure~$\nu_{Z'}$ via the immersion map~$Z'\to X$.
	
	\subsection{Equidistribution of orthogonal Shimura subvarieties}
	
	The following result is a refinement of~\cite[\textcolor{\myblack}{Th\'eor\`eme}~$1.2$]{cloull;equid1}.
	
	\begin{prop}\label{prop:imprmotocov}
	Let $X=\Gamma\backslash G(\RR)/K$ be an orthogonal Shimura variety, and let~$\seq{Z_m}{m}$ be a sequence of orthogonal Shimura subvarieties \textcolor{\myblack}{of~$X$} of fixed dimension.
	The sequence of probability measures~\textcolor{\myblack}{$\seq{\nu_{Z_m}}{m}$} contains a subsequence~\textcolor{\myblack}{$\seq{\nu_{Z_j}}{j}$} which weakly converges to the probability measure~\textcolor{\myblack}{$\nu_{Z}$} associated to some orthogonal Shimura subvariety~$Z$ of~$X$.
	The subvarieties~$Z_j$ are eventually contained in~$Z$.
	\end{prop}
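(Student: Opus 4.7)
The plan is to invoke the theorem of Clozel--Ullmo~\cite[Th\'eor\`eme~$1.2$]{cloull;equid1}, which applies because every orthogonal Shimura subvariety is strongly special by Proposition~\ref{prop;everyosv>2isstsp}. This produces a subsequence $(\nu_{Z_j})_j$ weakly converging to the probability measure~$\nu_Z$ of some strongly special subvariety~$Z$ of~$X$, with the $Z_j$ eventually contained in~$Z$. The task then reduces to upgrading ``strongly special'' to ``orthogonal Shimura subvariety'' in the sense of Definition~\ref{def;specialsubvar}, i.e., to showing that the $\QQ$-subgroup~$H \subset G$ producing~$Z$ has the form $\SO(V'', q|_{V''})$ for some rational subspace $V'' \subseteq V$.

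By Fiori's classification recalled in Remarks~\ref{rem;fromFiori} and~\ref{rem;fioridifShimsubvar}, the $\QQ$-subgroup~$H$ is either unitary or of the form $\Res_{F/\QQ}\SO(U, q_U)$ for some totally real extension~$F/\QQ$ and an $F$-quadratic space $(U, q_U)$. The unitary case is excluded at once by Remark~\ref{rem;fioridifShimsubvar}: the Hermitian symmetric domain of a unitary special subvariety is complex hyperbolic, and every Hermitian symmetric subdomain of it is again complex hyperbolic, in contradiction with the type~IV domain of any orthogonal~$Z_j$ contained in~$Z$.

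What remains is to show~$F = \QQ$, which I expect to be the crux. Each $H_j = \SO(V_j', q_j')$ is a $\QQ$-subgroup of~$G$ with $V_j' \subset V$ a \emph{rational} quadratic subspace of signature~$(r, 2)$, and Lemma~\ref{lemma:gaghk} allows us to place every $Z_j$ in a common ambient model; the eventual containment $Z_j \subset Z$ then lifts, after replacing each~$H_j$ by a suitable $\Gamma$-conjugate, to an inclusion of $\QQ$-algebraic groups $H_j \subset H$. Setting $V^\sharp := \sum_j V_j' \subseteq V$, which is a rational quadratic subspace of~$V$, the subgroup $H^\sharp := \SO(V^\sharp, q|_{V^\sharp})$ is a $\QQ$-orthogonal subgroup of~$H$ containing every~$H_j$, and its associated Shimura subvariety $Z^\sharp \subseteq Z$ contains all but finitely many $Z_j$. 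Since $\nu_Z$ has full support~$Z$ and $\nu_{Z_j}$ converges weakly to $\nu_Z$, the union $\bigcup_j Z_j$ is Euclidean-dense, hence Zariski-dense, in~$Z$; this forces $Z^\sharp = Z$ and therefore $H = H^\sharp = \SO(V^\sharp, q|_{V^\sharp})$, identifying $Z$ as an orthogonal Shimura subvariety.

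The main obstacle will be in the third paragraph, namely making precise the passage from the variety-level containment $Z_j \subset Z$ to the group-level inclusion $H_j \subset H$ (up to $\Gamma$-conjugation) and verifying that the $\QQ$-algebraic subgroup generated by the $H_j$'s is of the simple orthogonal form $\SO(V^\sharp, q|_{V^\sharp})$. The Zariski-density step itself is clean once the full support of $\nu_Z$ is invoked, but the normalization argument must be handled carefully so that the rational structures on the~$V_j'$ line up coherently inside a single~$V^\sharp \subseteq V$.
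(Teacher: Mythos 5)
Your first two paragraphs match the paper's proof exactly: invoke Clozel--Ullmo via Proposition~\ref{prop;everyosv>2isstsp}, then use Fiori's classification (Remarks~\ref{rem;fromFiori} and~\ref{rem;fioridifShimsubvar}) to reduce to the orthogonal type $M=\Res_{F/\QQ}\SO(U,q_U)$. The construction of $V^\sharp=\sum_j V_j'$ and $H^\sharp=\SO(V^\sharp,q|_{V^\sharp})$ is also the paper's move (there called $W$ and $E$). But the decisive step is missing, and the one concrete claim you make toward it is backwards. You assert that $H^\sharp$ is a subgroup of $H$ and that $Z^\sharp\subseteq Z$; nothing you have established gives this, and it is not what minimality provides. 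What you actually get — both from the minimality of $Z$ among special subvarieties containing the $Z_j$, and from your own density argument ($\bigcup_j Z_j$ dense in $Z$ and contained in the closed set $Z^\sharp$) — is the \emph{opposite} inclusion $Z\subseteq Z^\sharp$, i.e.\ $H\hookrightarrow H^\sharp$. Two applications of the same one-sided inclusion do not yield equality, so your conclusion $H=H^\sharp$ does not follow. Moreover, even knowing $Z\subseteq Z^\sharp$ with $Z^\sharp$ orthogonal does not make $Z$ itself an orthogonal Shimura subvariety: a priori $Z$ could still arise from $\Res_{F/\QQ}\SO(U,q_U)$ with $F\neq\QQ$ sitting inside $H^\sharp$.

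The paper closes this gap with an argument you do not have. Writing the real points of the inclusion $M\hookrightarrow E=H^\sharp$ via Fiori's factorization $M(\RR)\hookrightarrow\SO(\ell,2)\times\SO(\ell+2)\times\cdots\times\SO(\ell+2)$ (surjective onto the first factor), one observes that each $E_j(\RR)\cong\SO(r,2)$ is simple and non-compact, so any homomorphism to a compact factor $\SO(\ell+2)$ is trivial; hence all $E_j(\RR)$ land in $\SO(\ell,2)$, which must therefore act on a real quadratic space containing every $V_j\otimes\RR$. Since these span $V^\sharp\otimes\RR$ of signature $(r',2)$, this forces $\ell=r'$, which kills the compact factors (so $F=\QQ$) and, by surjectivity onto the first factor, gives $M=E$. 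You correctly flagged the third paragraph as the crux, but as written the step would fail: you need the signature/compact-factor argument, not the density argument, to obtain the inclusion in the nontrivial direction.
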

	For the sake of brevity, whenever a sequence~$\seq{Z_j}{j}$ is such that the associated probability measures weakly converge to the one of a subvariety~$Z$, as in Proposition~\ref{prop:imprmotocov}, we say that the subvarieties~$Z_j$ \emph{equidistribute} in $Z$.
	\begin{proof}
%
%
	
	\textcolor{\myblack}{The subvarieties~$Z_m$ are strongly special by Proposition~\ref{prop;everyosv>2isstsp}.
	By \cite[Théorème $1.2$]{cloull;equid1} we deduce that there is a subsequence~$\seq{\nu_{Z_j}}{j}$ converging to a probability measure~$\nu_Z$ associated to some strongly special Shimura subvariety~$Z$.
	Furthermore, there exists~$j_0\in\NN$ such that~$Z_j\subset Z$ for every~$j\ge j_0$.
	We denote by~$M$ the~$\QQ$-algebraic subgroup of~$G$ giving rise to~$Z$, so that~$Z=\Gamma\backslash \Gamma M(\RR)K/K$.}
	
	By~\cite{fiori}, \textcolor{\myblack}{the special subvariety~$Z$} is of orthogonal type; see also Remark~\ref{rem;fioridifShimsubvar}.
	\textcolor{\myblack}{We want to prove that~${M=\SO(W,q_W)}$} for some rational quadratic subspace~$(W,q_W)$ of signature~$(r',2)$ in~$(V,q)$, where $r'\ge r$, or equivalently that~$Z$ is an orthogonal Shimura subvariety of~$X$ of dimension $r'\ge r$.
	
	Since the subvarieties $Z_j$ equidistribute in $Z$, the latter is the minimal special subvariety of $X$ containing $Z_j$ for all $j\ge j_0$.
	That is, if $Y$ is a special subvariety of $X$ containing $Z_j$ for all $j\ge j_0$, then $Y$ contains also $Z$.
	
	Let~$E_j=\SO(V_j,q_{V_j})$ be the groups of isometries of some rational quadratic subspace of signature $(r,2)$ in $(V,q)$ \textcolor{\myblack}{such that~$Z_j=\Gamma\backslash\Gamma E_j(\RR)K/K$.
	Up to conjugation of~$E_j$ by some isometry in~$\Gamma$, we may assume that~$E_j$ is a subgroup of~$M$.}
	
	Let $W$ be the rational subspace of $V$ generated by all~\textcolor{\myblack}{$V_j$} with $j\ge j_0$, and let $q_W\coloneqq q|_W$.
	Consider the orthogonal Shimura subvariety
	\bes
	Y=\Gamma\backslash\Gamma E(\RR)K/K,
	\ees
	\textcolor{\myblack}{where~$E=\SO(W,q_W)$, and let~$r'=\dim Y$}.
	By construction, we know that $Z_j\subseteq Z\subseteq Y$, and that $E_j$ is a $\QQ$-subgroup of both~$E$ and~$M$, for every $j\ge j_0$.
	
	The inclusion of $\QQ$-groups $M\hookrightarrow E$ gives rise to an immersion of Shimura varieties.
	By Remark~\ref{rem;fromFiori}, there exists a quadratic space $(U,q_U)$ over a totally real field extension~$F$ of~$\QQ$ such that~$M=\Res_{F/\QQ}\SO(U,q_U)$.
	Up to base change to $\RR$, the inclusion $M\hookrightarrow E$ factors \textcolor{\myblack}{through}
	\be\label{eq;prooffacttrFi}
	M(\RR)\lhook\joinrel\longrightarrow\SO(\ell,2)\times\SO(\ell+2)\times\cdots\times\SO(\ell+2),
	\ee
	for some $\ell\le r'$, and the projection to the first factor $\SO(\ell,2)$ is surjective.
	
	If $\ell=r'$, then there must be no compact factor $\SO(\ell+2)$ in \eqref{eq;prooffacttrFi} by dimension issues, that is,~$F=\QQ$.
	Since the projection of $M(\RR)$ to $\SO(\ell,2)$ is surjective, the inclusion~${M(\RR)\hookrightarrow E(\RR)}$ is onto, hence~$M=E$.
	
	We conclude by proving that $\ell$ can not be less than $r'$.
	We know that $M(\RR)$ contains the group of isometries $E_j(\RR)\cong \SO(r,2)$, for every $j\ge j_0$.
	The composition of the inclusion~$E_j(\RR)\hookrightarrow M(\RR)$ with~\eqref{eq;prooffacttrFi} can only land in the first factor~$\SO(\ell,2)$ of the right-hand side of~\eqref{eq;prooffacttrFi}.
	In fact, suppose that it does not. Then, projecting to one of the factors~$\SO(\ell+2)$ in~\eqref{eq;prooffacttrFi}, there exists a non-trivial homomorphism of real algebraic groups~${\phi:E_j(\RR)\to\SO(\ell+2)}$.
	Since~$\ker(\phi)$ is normal in~$E_j(\RR)$ and the latter is simple, the map~$\phi$ must be injective.
	This implies that~$E_j(\RR)$ is isomorphic to a closed subgroup in~$\SO(\ell+2)$, hence it is compact, but it is well-known that~$E_j(\RR)$ is not.
	
	Since $E_j(\RR)$ is the group of isometries of the real quadratic subspace~${W_j\otimes\RR}$ of~${W\otimes\RR}$, then~$\SO(\ell,2)$ must be the group of isometries of a real quadratic space containing such ${W_j\otimes\RR}$ for all~$j$.
	This implies that~$(\ell,2)$ must be the signature of a quadratic space containing all~$W_j$.
	Since~$W\otimes\RR$ has been chosen to be the span of the subspaces $W_j\otimes\RR$, then~$(\ell,2)$ must be at least the signature of $W\otimes\RR$, and the latter is $(r',2)$.
	This implies that~${\ell=r'}$.
	\end{proof}
	
	\subsection{Sequences of orthogonal Shimura subvarieties in cohomology}\label{sec;seqincoho}
	
	\textcolor{\myblack}{The equidistribution properties of orthogonal Shimura subvarieties illustrated in Proposition~\ref{prop:imprmotocov} enable us to deduce the asymptotic behavior of sequences of cohomology classes of such subvarieties.
	This is possible thanks to the following direct consequence of~\cite[Corollary~$1.5$]{kozmau} and~\cite[Corollary~$2.9$]{tatho}.
	\begin{thm}[Koziarz--Maubon, Tayou--Tholozan]\label{thm:mainresu2}
	Let $X$ be an orthogonal Shimura variety of dimension~$n\ge 3$, and let $\seq{Z_j}{j\in\NN}$ be a sequence of pairwise different orthogonal Shimura subvarieties in~$X$ of dimension $r\ge3$.
	If such subvarieties equidistribute in an orthogonal Shimura subvariety~$Z$ of dimension $r'>r$, then
	\be\label{eq;mainresu2}
	\frac{[Z_j]}{\Vol(Z_j)}\xrightarrow[\,j\to\infty\,]{} \frac{r!}{r'!}\cdot[\omega]^{r'-r}\wedge\frac{[Z]}{\Vol(Z)}\qquad\text{in $H^{2(n-r)}(X,\QQ)\cap H^{n-r,n-r}(X)$}.
	\ee
	\end{thm}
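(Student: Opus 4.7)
The plan is to derive~\eqref{eq;mainresu2} as a direct combination of the two cited equidistribution-to-cohomology results. First, I would invoke the final assertion of Proposition~\ref{prop:imprmotocov} to reduce to the case in which $Z_j\subset Z$ for all $j$ sufficiently large, so that the whole sequence lives inside the Kähler submanifold $Z$, with $\nu_Z$ identified with its normalized top-degree Kähler volume measure. The hypothesis then reads exactly: a sequence of $r$-dimensional complex analytic subvarieties of the $r'$-dimensional Kähler manifold $Z$ has normalized probability measures weakly converging to the normalized volume of $Z$ itself.

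In this form, the heart of the argument is to invoke \cite[Corollary~$1.5$]{kozmau} together with \cite[Corollary~$2.9$]{tatho}. Jointly, these results upgrade the weak convergence $\nu_{Z_j}\to\nu_Z$ of probability measures to convergence in real cohomology of the normalized currents of integration $[Z_j]/\Vol(Z_j)$, and identify the limit as a scalar multiple of $[\omega]^{r'-r}\wedge[Z]/\Vol(Z)$. The power $r'-r$ of the Kähler class appears because the limiting measure is concentrated on the strictly higher-dimensional subvariety $Z$, and its density with respect to the current of integration along $Z$ is a multiple of $\omega^{r'-r}|_Z$. This already gives convergence to $c\cdot[\omega]^{r'-r}\wedge[Z]/\Vol(Z)$ for some constant $c=c(r,r')$.

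To pin down $c$, I would pair both sides with $[\omega]^r\in H^{2r}(X,\RR)$: on the left, Wirtinger's identity yields $\frac{1}{\Vol(Z_j)}\int_{Z_j}\omega^r=r!$; on the right, $\frac{c}{\Vol(Z)}\int_Z\omega^{r'}=c\,r'!$. Equating the two forces $c=r!/r'!$, giving the precise coefficient in~\eqref{eq;mainresu2}. Finally, membership of the limit in $H^{2(n-r)}(X,\QQ)\cap H^{n-r,n-r}(X)$ is automatic, since $[Z]$ is a rational $(n-r',n-r')$-class (being the class of a complex analytic subvariety) and $[\omega]$ is of type $(1,1)$. The main difficulty I anticipate is the careful bookkeeping of normalizations between the probabilistic formulation of the cited works and the cohomological formulation of~\eqref{eq;mainresu2}; this is exactly what produces the combinatorial factor $r!/r'!$ computed above, and no further conceptual input is required.
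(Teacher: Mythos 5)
Your proposal is correct and matches the paper's treatment: the paper states Theorem~\ref{thm:mainresu2} without further proof, precisely as a direct consequence of \cite[Corollary~$1.5$]{kozmau} and \cite[Corollary~$2.9$]{tatho}, which is exactly the reduction you carry out. Your additional Wirtinger pairing with $[\omega]^r$ to confirm the constant $r!/r'!$ is a sound consistency check consistent with the normalization $\vol=\omega^r/r!$ used in Section~\ref{subsec:measureonconcy}.
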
}
	\section{Cones generated by orthogonal Shimura subvarieties}\label{sec;seqspecialcycles}
	In this section we illustrate how to use Theorem~\ref{thm:mainresu2} to compute the accumulation rays of the cone~$\coneosub{r}{X}$.
	Moreover, we illustrate some analogies of~$\coneosub{n-2}{X}$ with the cone~$\conespec{X}$ generated by special cycles of codimension~$2$; see~\cite{zufcones} for more information on~$\conespec{X}$.\\
	
	Let $X$ be a normal irreducible complex space of dimension $n$.
	A \emph{cycle $Z$ of codimension~$r$ in $X$} is a locally finite formal sum
	\bes
	Z=\sum n_Y Y,\qquad\text{$n_Y\in\ZZ$,}
	\ees
	of distinct closed irreducible analytic subsets~$Y$ of codimension $r$ in $X$.
	The \emph{support} of the cycle $Z$ is the closed analytic subset $\supp(Z)=\bigcup_{n_Y\neq 0} Y$ of pure codimension $r$.
	The integer $n_Y$ is the \emph{multiplicity} of the irreducible component $Y$ of $\supp(Z)$ in the cycle $Z$.
	
	If $X$ is a manifold, and $\Gamma$ is a group of biholomorphic transformations of $X$ acting properly discontinuously, we may consider the preimage $\pi^*(Z)$ of a cycle~$Z$ of codimension~$r$ on~$X/\Gamma$ under the canonical projection $\pi\colon X\to X/\Gamma$.
	For any irreducible component~$Y$ of~$\pi^{-1}\big(\supp(Z)\big)$, the multiplicity~$n_Y$ of~$Y$ with respect to~$\pi^*(Z)$ equals the multiplicity of~$\pi(Y)$ with respect to~$Z$.
	This implies that~$\pi^*(Z)$ is a \emph{$\Gamma$-invariant cycle}, meaning that if~$\pi^*(Z)=\sum n_Y Y$, then
	\bes
	\gamma\big(\pi^*(Z)\big)\coloneqq \sum n_Y \gamma(Y)\qquad\text{equals $\pi^*(Z)$, for every $\gamma\in \Gamma$.}
	\ees
	Note that we do not take account of possible ramifications of the cover $\pi$.\\
	
	We now focus on orthogonal Shimura varieties associated to \emph{unimodular lattices}.
	Let $L$ be an even unimodular lattice of signature $(n,2)$.
	We denote by $(\cdot{,}\cdot)$ the bilinear form of $L$, and by $q$ the quadratic form defined as $q(\cdot)=\frac{1}{2}(\cdot{,}\cdot)$.
	The $n$-dimensional complex manifold
	\bes
\mathcal{D}_n=\{ z\in L\otimes\CC\setminus\{0\}\, :\,\text{$(z,z)=0$ and $(z,\bar{z})<0$}\}/\CC^* \subset \mathbb{P}(L\otimes\CC)
	\ees
	has two connected components.
	The action of the group of the isometries of~$L$ with determinant~$1$, denoted by~$\SO(L)$, extends to an action on~$\mathcal{D}_n$.
	We choose a connected component of~$\mathcal{D}_n$ and denote it by~$\projmodn$.
	The manifold~$\projmodn$ is a model of~$G(\RR)/K$, where~${G=\SO(L\otimes\QQ)}$ and~$K$ is a compact maximal subgroup of~$G(\RR)$.
	We define~$\SO^+(L)$ as the subgroup of~$\SO(L)$ that contains all isometries which preserve~$\projmodn$.
	
	Let~$X=\Gamma\backslash\projmodn$ be the orthogonal Shimura variety arising from some finite index subgroup~$\Gamma$ of~$\SO^+(L)$.
	We denote by~${\pi\colon \projmodn\to X}$ the canonical projection.
	An attractive feature of these kind of varieties is that they have many algebraic cycles.
	We recall below the construction of the so-called \emph{special cycles}; see~\cite{ku;algcycle}.
	They are a generalization of the Heegner divisors in higher codimension; see~\cite[Section~$5$]{br;borchp} for a description of such divisors in a setting analogous to this paper.
	
	We denote by $\halfint_d$, resp.~$\halfint_d^+$, the set of symmetric half-integral positive semi-definite, resp.\ positive definite, $d\times d$-matrices.
	If $\vect{\lambda}=(\lambda_1,\dots,\lambda_d)\in L^d$, for some $d<n$, the \emph{moment matrix} of~$\vect{\lambda}$ is defined as $q(\vect{\lambda})\coloneqq\frac{1}{2}\big( (\lambda_i,\lambda_j)\big)_{i,j}$, while its orthogonal complement in~$\projmodn$ is
	\bes
	\vect{\lambda}^\perp = \bigcap_{j=1	}^d \lambda_j^\perp.
	\ees
	If $T\in\halfint^+_d$, then
	\ba\label{eq;defSpeccydimm}
	\sum_{\substack{\vect{\lambda}\in L^d\\ q(\vect{\lambda})=T}} \vect{\lambda}^\perp
	\ea
	is a $\Gamma$-invariant cycle of codimension $d$ in $\projmodn$.
	Since the componentwise action of~$\Gamma$ on the vectors $\vect{\lambda}\in L^d$ of fixed moment matrix $T\in\halfint^+_d$ has finitely many orbits, the cycle~\eqref{eq;defSpeccydimm} descends to a cycle of codimension~$d$ on~$X$, which we denote by $Z(T)$ and call \emph{the special cycle associated to $T$}.
	The special cycles of codimension~$1$ are usually called \emph{Heegner divisors} and denoted by~$H_m$, where~$m\in\ZZ_{>0}$. 
	\begin{rem}
	The group $\GL_d(\ZZ)$ acts on $\halfint_d$ preserving~$\halfint^+_d$ under the action $T\mapsto u^t\cdot T\cdot u$, where~${u\in\GL_d(\ZZ)}$ and $T\in\halfint_d$.
	Since $q(u\cdot\vect{\lambda}^t)=u\cdot q(\vect{\lambda})\cdot u^t$ for every $u\in\GL_d(\ZZ)$ and~$\vect{\lambda}\in L^d$ with $q(\vect{\lambda})\in\halfint_d^+$, it is easy to see that $Z(T)=Z(u^t\cdot T\cdot u)$.
	\end{rem}
		
	Suppose now that~$\Gamma=\SO^+(L)$.
	If $m$ is a positive integer, we denote by $\primHeegner{m}$ the $m$-th primitive Heegner divisor, that is, the divisor of $X$ descending from the~$\Gamma$-invariant divisor of~$\projmodn$ defined as
	\be\label{eq;sumforprimHeegner}
	\sum_{\substack{\lambda\in L\,\text{primitive}\\ q(\lambda)=m}}\lambda^\perp.
	\ee
	\begin{rem}[See {\cite[Lemma~$4.2$ and~($17$)]{brmo}}]\label{rem;splitHeegninprinHeegn}
	If $m$ is squarefree, then the Heegner divisor~$H_m$ is the same as the primitive Heegner divisor~$\primHeegner{m}$.
	In general, we have
	\bes
	H_m=\sum_{\substack{t\in\ZZ_{>0}\\ t^2|m}}\primHeegner{m/t^2}
	\qquad\text{and}\qquad
	\primHeegner{m}=\sum_{\substack{t\in\ZZ_{>0}\\ t^2|m}} \mu(t) H_{m/t^2}
	,
	\ees
	where~$\mu$ is the Möbius function.
	\end{rem}
	
	In the following result we gather some basic properties of the irreducible components of special cycles.
	\begin{lemma}\label{lemma;basicsspcy}
	Let~$X=\SO^+(L)\backslash G(\RR)/K$ be an orthogonal Shimura variety of dimension~$n>2$ arising from an even unimodular lattice~$L$.
	\begin{enumerate}[label=(\roman*)]
	\item\label{item1;basicsspcy}
	All irreducible components of~$Z(T)$, where~$T\in\halfint^+_d$, are orthogonal Shimura subvarieties of codimension $d$ in $X$, and all orthogonal Shimura subvarieties of codimension~$d$ in~$X$ arise in this way,
	\item For every positive integer $m$, we have $\primHeegner{m}=2\cdot\Gamma\backslash\Gamma \lambda^\perp$, where $\lambda\in L$ is any primitive lattice vector such that $q(\lambda)=m$.
	Equivalently,~$\primHeegner{m}$ is the orthogonal Shimura subvariety $\Gamma\backslash\Gamma \lambda^\perp$ of $X$ counted twice.\label{item2;basicsspcy}
	\item Let~$T=(m_{i,j})_{i,j}\in\halfint^+_d$ be such that~$m_{\ell,\ell}$ is squarefree for some index~$\ell$.\label{item3;basicsspcy}
	All irreducible components of $Z(T)$ are subvarieties of the irreducible component of~$H_{m_{\ell,\ell}}$.
	\end{enumerate}
	\end{lemma}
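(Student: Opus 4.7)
For part~\ref{item1;basicsspcy} the plan is to unwind the definitions. Given~$\vect{\lambda}=(\lambda_1,\dots,\lambda_d)\in L^d$ with moment matrix~$T=q(\vect{\lambda})\in\halfint^+_d$, positive definiteness of~$T$ forces the~$\lambda_j$ to be linearly independent and their~$\QQ$-span~$W$ to be a positive definite rational subspace of~$V=L\otimes\QQ$; consequently~$V'=W^\perp$ has signature~$(n-d,2)$. Realizing~$\projmodn$ as~$\Gr(V)$, one checks directly that~$\vect{\lambda}^\perp=\Gr(V')$, which is the Hermitian symmetric domain of~$\SO(V',q|_{V'})$, so that the image of~$\vect{\lambda}^\perp$ in~$X$ is an orthogonal Shimura subvariety of codimension~$d$ by Definition~\ref{def;specialsubvar}. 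For the converse, given any orthogonal Shimura subvariety of codimension~$d$ arising from~$\SO(V',q')$, pick a~$\ZZ$-basis~$\lambda_1,\dots,\lambda_d$ of the positive definite rank~$d$ lattice~$V'^\perp\cap L$; this yields~$\vect{\lambda}\in L^d$ with~$q(\vect{\lambda})\in\halfint_d^+$ and~$\vect{\lambda}^\perp=\Gr(V')$, and Lemma~\ref{lemma:gaghk} then identifies the given subvariety with an irreducible component of~$Z\big(q(\vect{\lambda})\big)$.

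For part~\ref{item2;basicsspcy}, the crucial input is the transitivity of~$\SO^+(L)$ on the set of primitive vectors of~$L$ of a fixed norm~$m$, which is the classical consequence of Eichler's theorem for indefinite even unimodular lattices. Granting this, the~$\Gamma$-invariant sum~\eqref{eq;sumforprimHeegner} consists of a single~$\Gamma$-orbit of summands~$\lambda^\perp$; the factor of~$2$ arises from the fact that~$\lambda$ and~$-\lambda$ are distinct lattice vectors contributing independently to~\eqref{eq;sumforprimHeegner} but produce identical geometric divisors~$\lambda^\perp=(-\lambda)^\perp$. Descending via~$\pi\colon\projmodn\to X$ thus yields the single irreducible component~$\Gamma\backslash\Gamma\lambda^\perp$ with multiplicity~$2$.

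For part~\ref{item3;basicsspcy} I would argue as follows. If~$q(\lambda_\ell)=m_{\ell,\ell}$ is squarefree and~$\lambda_\ell=k\mu$ with~$k\in\ZZ_{>0}$ and~$\mu\in L$, then~$k^2\mid m_{\ell,\ell}$ forces~$k=1$, so~$\lambda_\ell$ is automatically primitive. Hence~$\lambda_\ell^\perp$ is a summand of~\eqref{eq;sumforprimHeegner} for~$m=m_{\ell,\ell}$, and by Remark~\ref{rem;splitHeegninprinHeegn} the primitive Heegner divisor equals~$H_{m_{\ell,\ell}}$ in the squarefree case. The tautological inclusion~$\vect{\lambda}^\perp=\bigcap_j\lambda_j^\perp\subseteq\lambda_\ell^\perp$ and part~\ref{item2;basicsspcy} then put each irreducible component of~$Z(T)$ inside the unique irreducible component~$\Gamma\backslash\Gamma\lambda_\ell^\perp$ of~$H_{m_{\ell,\ell}}$.

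The only substantive step is the transitivity statement used in~\ref{item2;basicsspcy}; parts~\ref{item1;basicsspcy} and~\ref{item3;basicsspcy} are formal verifications given the descriptions of~$\vect{\lambda}^\perp$ and of orthogonal Shimura subvarieties already established in Section~\ref{sec;Shimvar}. I would therefore make sure to cite a precise reference (e.g.\ Eichler's criterion for indefinite even unimodular lattices) at that point of the proof.
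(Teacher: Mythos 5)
Your proposal is correct and follows essentially the same route as the paper: the same unwinding of definitions for (i) (with the converse obtained by scaling a basis of the positive definite orthogonal complement into $L$), the same single-orbit-plus-$\pm\lambda$ argument for (ii), and the same primitivity-from-squarefreeness plus tautological inclusion for (iii). The only cosmetic difference is that the paper cites a lemma of Freitag--Hermann for the transitivity of $\SO^+(L)$ on primitive vectors of fixed norm, where you invoke Eichler's criterion directly; these are the same fact.
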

	\begin{proof}
	We begin with~\ref{item1;basicsspcy}.
	It is easy to see that every irreducible component of~$Z(T)$ is by definition the immersion in~$X$ of the orthogonal Shimura variety associated to the~$\QQ$-subgroup~$H=\SO(\langle \lambda_1,\dots,\lambda_d\rangle_\QQ^\perp)$ of $G=\SO(L\otimes\QQ)$, and to the arithmetic group~${\Gamma\cap H(\QQ)^+}$.
	In fact, the quadratic subspace~$\langle \lambda_1,\dots,\lambda_d\rangle_\QQ^\perp$ of $L\otimes\QQ$ is of signature~$(n-d,2)$, since~$T$ is non-singular.
	Conversely, if~$Z$ is an orthogonal Shimura subvariety of codimension~$d$ in~$X$, then it arises from a rational quadratic subspace~$(V',q')$ of signature~$(n-d,2)$ in~$(V,q)$, where~$V=L\otimes\QQ$.
	Let~$S$ be the orthogonal complement of~$(V',q')$ in~$(V,q)$.
	It is a rational quadratic space of signature~$(d,0)$.
	Let~$v_1,\dots,v_d$ be a basis of~$S$.
	Up to multiplying by suitable integers, we may assume that such a basis is made of lattice vectors of~$L$.
	This implies that~$Z$ is an irreducible component of the special cycles~$Z(q(v_1,\dots,v_d))$.
	
	Point~\ref{item2;basicsspcy} is~\cite[Lemma~$4.3$]{brmo}, we briefly recall the proof.
	Since~$q(\lambda)=q(-\lambda)$ and~$\lambda^\perp=(-\lambda)^\perp$, we see that in~\eqref{eq;sumforprimHeegner} every subvariety $\lambda^\perp$, such that $\lambda$ is primitive with norm~${q(\lambda)=m}$, is counted twice.
	In fact, the only primitive lattice vectors of norm~$m$ generating the line $\RR\cdot\lambda\subset L\otimes\RR$ are~$\lambda$ and~$-\lambda$.
	By~\cite[Lemma~$4.4$]{FreiHerm}, any two primitive lattice vectors in~$L$ with the same norm lie in the same~$\SO^+(L)$-orbit.
	This implies that~${\Gamma\lambda^\perp=\Gamma\lambda'^\perp}$, for every primitive~${\lambda,\lambda'\in L}$ of norm~$m$.
	
	We conclude with~\ref{item3;basicsspcy}.
	Let $\vect{\lambda}=(\lambda_1,\dots,\lambda_d)\in L^d$ be such that $q(\vect{\lambda})=T$.
	If~$m_{\ell,\ell}$ is squarefree, then the entry~$\lambda_\ell$ is a primitive lattice vector of~$L$.
	By~\ref{item2;basicsspcy}, we deduce that~$\Gamma\backslash\Gamma\lambda_\ell^\perp$ is the irreducible component of the Heegner divisor $\primHeegner{m_{\ell,\ell}}$ on~$X$.
	Since $\vect{\lambda}^\perp$ is contained in~$\lambda_\ell^\perp$, then~$\Gamma\backslash\Gamma \vect{\lambda}^\perp$ is a subvariety of~$\Gamma\backslash\Gamma\lambda_\ell^\perp$.	
	\end{proof}
	
	\subsection{Proof of Theorem~\ref{thm;acconeortshsubv}}
	In this section we prove Theorem~\ref{thm;acconeortshsubv}, which is stated below for the convenience of the reader.
	\begin{thm}\label{thm;againfromintro}
	Let~$X$ be an orthogonal Shimura variety of dimension~$n$, and let~$r>(n+1)/2$.
	If~$[Z]$ is a non-zero cohomology class arising from an orthogonal Shimura subvariety~$Z$ of dimension~$r'>r$ in~$X$, then the ray~$\RR_{\ge0}\cdot [\omega]^{r'-r}\wedge[Z]$ is an accumulation ray of~$\coneosub{r}{X}$.
	\end{thm}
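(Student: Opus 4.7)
We will construct a sequence $(Z_j)_{j\in\NN}$ of pairwise distinct orthogonal Shimura subvarieties of $X$ of dimension $r$, all contained in $Z$, that equidistributes in $Z$. Theorem~\ref{thm:mainresu2} then yields
\[
\frac{[Z_j]}{\Vol(Z_j)}\xrightarrow[\,j\to\infty\,]{}\frac{r!}{r'!}\cdot[\omega]^{r'-r}\wedge\frac{[Z]}{\Vol(Z)}\quad\text{in $H^{2(n-r)}(X,\QQ)$},
\]
so that the rays $\RR_{\ge 0}\cdot[Z_j]$ converge to $\RR_{\ge 0}\cdot[\omega]^{r'-r}\wedge[Z]$, provided the limit class is non-zero. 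This non-vanishing is exactly what the hypothesis $r>(n+1)/2$ buys us: combined with $r'>r$, it gives $2(n-r)<n$, and strong Lefschetz on the projective Baily--Borel compactification of $X$ then makes the map $\wedge[\omega]^{r'-r}\colon H^{2(n-r')}(X,\RR)\to H^{2(n-r)}(X,\RR)$ injective, so the assumption $[Z]\neq 0$ yields $[\omega]^{r'-r}\wedge[Z]\neq 0$.

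To construct the sequence, view $Z$ itself as an orthogonal Shimura variety of dimension $r'$, arising from $H=\SO(V',q')$ with $V'\subset V$ of signature $(r',2)$. By Lemma~\ref{lemma:gaghk} applied internally to $Z$, the $r$-dimensional orthogonal Shimura subvarieties of $Z$ are in bijection with $\Gamma_H$-orbits of positive definite rational subspaces $U\subset V'$ of dimension $r'-r$, via $U\mapsto$ the subvariety arising from $U^\perp\cap V'$. Each such subvariety is automatically an orthogonal Shimura subvariety of $X$ of dimension $r$, and there are countably infinitely many of them.

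Enumerate the countable collection of proper orthogonal Shimura subvarieties of $Z$ as $Y_1,Y_2,\dots$, and build $(Z_j)$ recursively: at step $j$ choose $Z_j$ to be any $r$-dimensional orthogonal Shimura subvariety of $Z$ which is distinct from $Z_1,\dots,Z_{j-1}$ and not contained in $Y_1\cup\dots\cup Y_j$. Such a choice is possible because every $Y_k$ arises from a proper rational subspace $V_k\subsetneq V'$, and an $r$-dimensional subvariety sits inside $Y_k$ exactly when its defining $U$ can be placed inside $V_k$ (up to $\Gamma_H$); this forces $U$ into a strictly smaller rational subfamily that cannot, even after finite union, exhaust the full family of $\Gamma_H$-orbits of positive definite rational $(r'-r)$-subspaces of $V'$. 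Now apply Proposition~\ref{prop:imprmotocov} to $(Z_j)$: a subsequence $(Z_{j_s})$ equidistributes in some orthogonal Shimura subvariety $Z''$ of $X$, with $Z_{j_s}\subset Z''$ for $s$ large. Since all $Z_j$ lie in $Z$, necessarily $Z''\subseteq Z$. If $Z''\subsetneq Z$, then $Z''=Y_{k_0}$ for some index $k_0$, contradicting $Z_j\not\subset Y_{k_0}$ for every $j\geq k_0$ by construction. Hence $Z''=Z$ and $(Z_j)$ equidistributes in $Z$ as required.

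The main obstacle is the richness claim used in the recursive construction, namely that no finite union of proper orthogonal Shimura subvarieties of $Z$ contains every $r$-dimensional orthogonal Shimura subvariety of $Z$. This reduces to an existence statement about positive definite rational subspaces of $V'$ avoiding finitely many lower-dimensional $\Gamma_H$-orbits of rational subspaces, which can be verified via Witt's extension theorem together with the density of $V'\otimes\QQ$ in $V'\otimes\RR$. A secondary technical step is justifying the Lefschetz injectivity on the possibly non-compact $X$ through its Baily--Borel compactification.
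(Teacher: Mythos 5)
Your overall strategy — build a sequence of pairwise distinct $r$-dimensional orthogonal Shimura subvarieties inside $Z$ that equidistributes in $Z$, apply Theorem~\ref{thm:mainresu2}, and use Hard Lefschetz on $IH^*(\bbcomp)$ to see the limit class is non-zero — matches the paper's. Where you diverge is the construction of the equidistributing sequence: the paper (its Lemma~\ref{lem;exofequidsubvarinZ}) first handles the codimension-one case inside $Z$, where Proposition~\ref{prop:imprmotocov} forces equidistribution in $Z$ by pure dimension count (a proper limit $Z''\subseteq Z$ containing infinitely many pairwise distinct codimension-one subvarieties must have dimension $\geq r'-1$ and $<r'$, hence equal $r'-1$, contradicting distinctness), and then descends to lower $r$ by reverse induction and a diagonal-sequence argument. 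You instead work directly at dimension $r$, enumerating all proper orthogonal Shimura subvarieties of $Z$ and recursively dodging the first $j$ of them. Both routes can be made to work, but yours shifts the entire burden to the ``richness'' claim you flag as the main obstacle — that no finite union of proper orthogonal Shimura subvarieties of $Z$ can contain every $r$-dimensional one. You sketch an argument via Witt extension and density of rational subspaces but do not carry it out; as written this is a genuine gap. The paper's reduction to codimension one sidesteps precisely this: there, one only needs infinitely many pairwise distinct codimension-one subvarieties (produced by picking $g_j\in H(\QQ)$ in distinct $\Gamma H_0(\QQ)$-cosets), and the dimension count does the rest, with no richness claim needed.

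There is a second, smaller gap. The definition of accumulation ray requires a sequence of pairwise different \emph{cohomology classes} in $\genosub{r}{X}$, not merely pairwise different subvarieties. Distinct $Z_j$ can a priori have equal (or zero) classes. The paper closes this by invoking Lemma~\ref{lem;voldiverge}: since $\Vol(Z_j)\to\infty$ and $[Z_j]/\Vol(Z_j)$ converges to a non-zero class, the $[Z_j]$ are eventually pairwise distinct and non-zero, so one may pass to a suitable subsequence. You do not address this; you should add that step. Finally, a minor slip: the bound the Baily--Borel/intersection-cohomology argument needs is $2(n-r)<n-1$, which is exactly what $r>(n+1)/2$ gives, not merely $2(n-r)<n$ as you wrote; the hypothesis $r'>r$ plays no role in this inequality.
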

	\begin{rem}
	It is well-known that the cohomology classes of compact subvarieties in a Kähler manifold are non-zero.
	\textcolor{\myblack}{As already mentioned in the introduction,} this result is not available in the literature for (non-compact) orthogonal Shimura subvarieties.
	For this reason, the hypothesis appearing in Theorem~\ref{thm;againfromintro} that the class~$[Z]$ is non-zero is not trivial.
	\end{rem}
	\begin{rem}
	In~\cite{fm;boundarybe} it is proved that infinitely many special cycles (of fixed dimension) have non-vanishing cohomology classes.
	By Theorem~\ref{thm;againfromintro}, we may deduce an analogous statement for the classes of irreducible components of special cycles.
	In fact, if~${R\coloneqq \RR_{\ge0}\cdot [\omega]^{r'-r}\wedge[Z]}$ is an accumulation ray of~$\coneosub{r}{X}$, then there are \emph{infinitely many non-zero} cohomology classes of irreducible components of special cycles whose associated rays converge to~$R$.
	\end{rem}
	To prove Theorem~\ref{thm;againfromintro} we need the following auxiliary results.
	\begin{lemma}\label{lem;exofequidsubvarinZ}
	Let~$Z$ be an orthogonal Shimura subvariety of dimension~$r'\ge 2$ in~$X$, and let~$r<r'$ be a positive integer.
	There exists a sequence of pairwise different orthogonal Shimura subvarieties~$\seq{Z_j}{j}$ of dimension~$r$ that equidistribute in~$Z$.
	\end{lemma}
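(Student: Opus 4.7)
The plan is to construct the desired sequence by induction on the codimension $k := r' - r \geq 1$, working inside $Z$ viewed itself as an orthogonal Shimura variety of dimension $r' \geq 2$.

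First I would establish the base case $k = 1$ as follows. I pick any sequence $\seq{Z_j}{j}$ of pairwise different codimension-$1$ orthogonal Shimura subvarieties of $Z$; such a sequence exists because the defining quadratic space of $Z$ (of signature $(r',2)$) contains a full-rank lattice on which the quadratic form attains infinitely many positive values, yielding infinitely many orbits of primitive positive-norm vectors under the arithmetic subgroup $\Gamma \cap H(\QQ)^+$. By Proposition~\ref{prop:imprmotocov}, a subsequence equidistributes in some orthogonal Shimura subvariety $Z^*$ of $X$, with $Z_j \subseteq Z^*$ for $j$ large. Since the $Z_j$ are irreducible of dimension $r'-1$ and are contained in $Z$, one has $r' - 1 \leq \dim Z^* \leq r'$. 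The case $\dim Z^* = r' - 1$ would force infinitely many $Z_j$ to coincide with $Z^*$, contradicting pairwise distinctness; hence $\dim Z^* = r'$, so $Z^* = Z$.

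Next I would perform the induction on $k$. Suppose the statement is known with $r$ replaced by $r+1$: this provides a sequence $\seq{Y_j}{j}$ of pairwise different orthogonal Shimura subvarieties of $Z$ of dimension $r+1$ equidistributing in $Z$. Each $Y_j$ is itself an orthogonal Shimura variety of dimension $r+1 \geq 2$, so the base case applied inside $Y_j$ yields, for every $j$, a sequence $\seq{Y_{j,\ell}}{\ell}$ of pairwise different orthogonal Shimura subvarieties of $Y_j$ of dimension $r$ equidistributing in $Y_j$; each such $Y_{j,\ell}$ is automatically an orthogonal Shimura subvariety of $X$, since the inclusions of the defining rational quadratic subspaces compose. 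A standard diagonal argument in the weak topology of probability measures, testing against a countable dense family of compactly supported continuous functions, then produces a choice of indices $\ell(j)$ for which $\nu_{Y_{j,\ell(j)}} \to \nu_Z$ weakly, so the sequence $\seq{Y_{j,\ell(j)}}{j}$ equidistributes in $Z$.

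The main subtlety I expect is in ensuring pairwise distinctness throughout the diagonal construction, since a priori the same $r$-dimensional orthogonal Shimura subvariety could appear as $Y_{j,\ell}$ for several distinct indices $j$. This is handled by exploiting the freedom to enlarge $\ell(j)$: the base case provides an infinite reservoir of choices for each fixed $j$, so at each stage one may select $Y_{j,\ell(j)}$ distinct from all finitely many previously chosen members of the sequence while still preserving the diagonal convergence.
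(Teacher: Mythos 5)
Your proposal is correct and follows essentially the same route as the paper: reverse induction on $r$ starting from codimension one inside $Z$, with a diagonal extraction in the inductive step and an appeal to Proposition~\ref{prop:imprmotocov} (minimality of the limit subvariety plus dimension count) to identify the limit as $Z$. The paper constructs the base-case sequence via cosets $g_j\in H(\QQ)$ and Lemma~\ref{lemma:gaghk} rather than via orbits of primitive positive-norm lattice vectors, and phrases the diagonal selection as a ``without loss of generality'' rather than through a countable test family, but these are cosmetic variations of the same argument.
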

	\begin{proof}
	Let~$G=\SO(V,q)$ for some rational quadratic space~$(V,q)$ of signature~$(n,2)$, and let~$(W,q_W)$ be a rational quadratic subspace of signature~$(r',2)$ in~$(V,q)$ such that
	\[
	Z=\Gamma\backslash\Gamma H(\RR) K/K,\qquad \text{where~$H=\SO(W,q_W)$}.
	\]
	
	Suppose that~$r=r'-1$.
	We are going to construct pairwise different subvarieties
	\[
	Z_j=\Gamma\backslash\Gamma H_j(\RR) K/K
	\]
	arising from~$H_j=\SO(V_j,q_j)$, for some rational quadratic subspaces~$(V_j,q_j)$ of signature~${(r,2)}$ in~$(W,q_W)$.
	Since they have codimension~$1$ in~$Z$, they equidistribute in~$Z$ by Proposition~\ref{prop:imprmotocov}.
	
	Let~$(V_0,q_0)$ be any subspace of signature~${(r,2)}$ in~$(W,q_W)$, and let~$Z_0$ be the orthogonal Shimura subvariety arising from~$H_0=\SO(V_0,q_0)$.
	By Lemma~\ref{lemma:gaghk}, every~$r$-dimensional orthogonal Shimura subvariety of~$X$ is of the form
	\[
	\Gamma\backslash\Gamma g H_0(\RR) K/K,\qquad\text{for some~$g\in G(\RR)$.}
	\]
	We choose~$Z_1$ to be the subvariety arising from some~$g=g_1\in H(\QQ)$ such that~$g_1\not\in\Gamma H_0(\QQ)$.
	Note that~$Z_1\subseteq Z$ and~$Z_1\neq Z_0$.
	
	We iterate this construction such that for every~$j\in\ZZ_{>0}$, the subvariety~$Z_j$ arises from some~$g=g_j\in H(\QQ)$ such that~$g_j$ does not lie in
	\[
	\Gamma H_0(\QQ),\qquad \Gamma g_1 H_0(\QQ),\qquad \dots,\qquad \Gamma g_{j-1} H_0(\QQ).
	\]
	In this way we obtain a sequence of pairwise different subvarieties~$\seq{Z_j}{j}$ as requested, concluding the proof if~$r=r'-1$.
	
	We proceed by reverse induction on~$r$.
	Suppose that there exists a sequence of pairwise different orthogonal Shimura subvarieties~$\seq{Z_j}{j}$ of dimension~$r+1$ that equidistribute in~$Z$.
	Following the same construction as above, there exists a sequence of pairwise different orthogonal Shimura subvarieties~$\seq{Z_{j,i}}{i}$ of dimension~$r$ which equidistribute in~$Z_j$, for every~$j$.
	Consider the sequence~$\seq{Z_{j,j}}{j}$.
	Without loss of generality we may assume that the subvarieties~$Z_{j,j}$ are pairwise different.
	Furthermore, we may assume that they are not eventually contained in any proper subvariety of~$Z$, since the~$Z_j$ equidistribute in~$Z$.
	
	The subvarieties~$Z_{j,j}$ equidistribute in~$Z$ by Proposition~\ref{prop:imprmotocov}.
	\end{proof}	
	The following ancillary result can be easily proved in the same way as~\cite[Proof of Corollary~$2.3$]{MoTo}.
	\begin{lemma}\label{lem;voldiverge}
	Let~$(Z_j)_j$ be a sequence of pairwise different orthogonal Shimura subvarieties of the same dimension.
	The volumes~$\Vol(Z_j)$ diverge as~$j\to\infty$.
	\end{lemma}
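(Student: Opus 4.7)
My plan is to argue by contradiction. Assume that $\Vol(Z_j)$ does not diverge; after passing to a subsequence we may then suppose that $\Vol(Z_j)\le V$ for some uniform constant $V>0$. Applying Proposition~\ref{prop:imprmotocov} and extracting once more, I obtain a subsequence that equidistributes in an orthogonal Shimura subvariety $Z$ of some dimension $r'\ge r$, with $Z_j\subseteq Z$ for all sufficiently large $j$.

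I then distinguish two cases. If $r'=r$, each $Z_j$ is a closed analytic subset of $Z$ of the same dimension as $Z$, hence is a union of irreducible components of $Z$; since $Z$ has only finitely many such components, only finitely many $Z_j$ are possible, contradicting the assumption that the $Z_j$ are pairwise distinct.

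If $r'>r$, each $Z_j\subseteq Z$ is an orthogonal Shimura subvariety of $Z$, and by Lemma~\ref{lemma:gaghk} it is determined, up to the action of the arithmetic group defining $Z$, by a rational quadratic subspace of signature $(r,2)$ inside the rational space $(W,q_W)$ underlying $Z$. The Kähler volume $\Vol(Z_j)$ is controlled by the discriminant of this subspace (equivalently, by the covolume of the lattice it cuts out), so the uniform bound $\Vol(Z_j)\le V$ translates into a uniform bound on these discriminants. Classical reduction theory for integral quadratic forms then ensures that only finitely many inequivalent rational subspaces of bounded discriminant can occur inside a fixed rational quadratic space, so that the family $(Z_j)$ must be finite, a contradiction.

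The main delicate point is the precise comparison between $\Vol(Z_j)$ and an arithmetic invariant of the associated rational subspace; this can be carried out via Siegel's volume formula for orthogonal groups, or by an explicit fundamental-domain computation for $\SO(r,2)$, and is the step that the paper imports from the analogous argument in the Mok--To reference. Once this comparison is in place, the rest of the proof is formal: equidistribution plus the finiteness statement above immediately yields divergence of the volumes.
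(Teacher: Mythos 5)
The paper gives no internal argument for this lemma: it states only that the result ``can be easily proved in the same way as \cite[Proof of Corollary~$2.3$]{MoTo}'', so there is no in-paper proof to measure yours against, and I will judge your sketch on its own terms. Its overall shape is right — the lemma is equivalent to the claim that for every $V>0$ only finitely many $r$-dimensional orthogonal Shimura subvarieties have volume at most $V$ — and your case $r'=r$ is handled correctly. Note, though, that the detour through Proposition~\ref{prop:imprmotocov} buys you nothing: if you can prove the finiteness claim you invoke when $r'>r$, you can apply it directly inside $X$ and skip the equidistribution and the case distinction entirely.

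The genuine gap is that this finiteness claim \emph{is} the content of the lemma, and neither of the two inputs you use to justify it is fully in place. The arithmetic input — finitely many rational subspaces of signature $(r,2)$ and bounded discriminant up to $\Gamma$ — needs more than reduction theory: reduction theory bounds the number of isometry classes of integral forms of given rank and bounded discriminant, but you also need that each class is realized by only finitely many $\Gamma$-orbits of primitive sublattices of a fixed lattice (finiteness of primitive embeddings up to $\bigO(L)$); this is standard but is a separate statement, and ``inequivalent'' must mean $\Gamma$-inequivalent for the conclusion about the $Z_j$ to follow. More seriously, the analytic input — that $\Vol(Z_j)$ is bounded below by an unbounded increasing function of the discriminant of $L\cap V_j$ — is asserted rather than proved. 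Invoking Siegel's or Prasad's volume formula is the right instinct, but one must actually verify that the resulting lower bound tends to infinity uniformly over sublattices of $L$, and also control the index of $\Gamma\cap H_j(\RR)$ inside the full arithmetic group of the sublattice, which varies with $j$. Since everything else in your argument is formal, the entire weight of the lemma rests on this unproved comparison; as written, the proposal is closer to a reformulation of the lemma together with a pointer to plausible tools than to a proof. (The type of argument the paper imports is, as far as one can tell, more geometric — compactness of bounded-volume families of totally geodesic subvarieties plus rigidity — and avoids the discriminant bookkeeping altogether, so your route is genuinely different in flavor but would need the quantitative step to be carried out.)
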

	We are now ready to prove the main result of this section.
	\begin{proof}[Proof of Theorem~\ref{thm;againfromintro}]
	By Lemma~\ref{lem;exofequidsubvarinZ} there exists a sequence of pairwise different orthogonal Shimura subvarieties~$\seq{Z_j}{j}$ of dimension~$r$ that equidistribute in~$Z$.
	By \textcolor{\myblack}{Theorem~\ref{thm:mainresu2}} we deduce that
	\[
	\frac{[Z_j]}{\Vol(Z_j)}\xrightarrow[\,j\to\infty\,]{} \frac{r!}{r'!}\cdot[\omega]^{r'-r}\wedge\frac{[Z]}{\Vol(Z)}\qquad\text{in $H^{2(n-r)}(X,\RR)$}.
	\]
	
	The cohomology~$H^{s}(X,\CC)$ is isomorphic to the intersection cohomology~$IH^s(\bbcomp,\CC)$ of the Baily--Borel compactification~$\bbcomp$ of~$X$, for every~$s<n-1$.
	Clearly, this isomorphism of cohomologies is available if~$s=2(n-r)$ under the assumption that~$r>(n+1)/2$. 
	We may then deduce that the map given by wedging classes in~$H^{2r'}(X,\CC)$ by~$[\omega]^{r'-r}$ is injective by the Hard-Lefschetz Theorem; see~\cite[ Remark~$4.8$]{zufcones} and~\cite[Corollary~$9.2.3$]{interscoho}.
	Since~$[Z]\neq 0$, we deduce that~$[\omega]^{r'-r}\wedge [Z]\neq 0$. 
	
	Since~$\Vol(Z_j)$ diverges by Lemma~\ref{lem;voldiverge}, we may assume that there exists a subsequence of subvarieties~$\seq{Z_i}{i}$ whose cohomology classes are pairwise different and non-zero.
	Since
	\[
	\RR_{\ge0}\cdot [Z_j] \xrightarrow[\,j\to\infty\,]{} \RR_{\ge0}\cdot[\omega]^{r'-r}\wedge[Z],
	\]
	we deduce that $\RR_{\ge0}\cdot [\omega]^{r'-r}\wedge[Z]$ is an accumulation ray of~$\coneosub{r}{X}$.
	\end{proof}
	
	\subsection{Proof of Corollary~\ref{cor;acconeortshsubv}}
	We prove here  Corollary~\ref{cor;acconeortshsubv}, which we restate below for convenience.
	\begin{cor}
	Let~$X=\SO^+(L)\backslash G(\RR)/K$ be an orthogonal Shimura variety of dimension~$n>5$ arising from an even unimodular lattice~$L$.
	The accumulation rays of~$\coneosub{n-2}{X}$ are~$\RR_{\ge0}\cdot[\omega]^2$ and the rays~$\RR_{\ge0}\cdot[\omega]\wedge[Z]$, for every orthogonal Shimura subvariety~$Z$ of codimension~$1$.
	\end{cor}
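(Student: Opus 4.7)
The plan is to prove the two inclusions of accumulation rays separately.

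For the forward inclusion, I would show each candidate ray is indeed an accumulation ray by applying Theorem~\ref{thm;againfromintro} (with $r=n-2$, which satisfies the hypothesis $r>(n+1)/2$ precisely because $n>5$). For the ray $\RR_{\ge 0}\cdot[\omega]^2$, I would take $Z=X$, viewed as a trivial orthogonal Shimura subvariety of itself of dimension $r'=n$; the class $[X]\in H^0(X,\RR)$ is nonzero, so Theorem~\ref{thm;againfromintro} produces $\RR_{\ge 0}\cdot[\omega]^2\wedge[X]=\RR_{\ge0}\cdot[\omega]^2$ as an accumulation ray. For each ray $\RR_{\ge 0}\cdot[\omega]\wedge[Z]$ with $Z$ of codimension one, I would invoke the Bruinier--M\"oller non-vanishing statement recalled in the introduction: under the even unimodular assumption, every codimension-one orthogonal Shimura subvariety has $[Z]\neq 0$ in $H^2(X,\RR)$. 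Then Theorem~\ref{thm;againfromintro} with $r'=n-1$ exhibits $\RR_{\ge 0}\cdot[\omega]\wedge[Z]$ as an accumulation ray.

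For the backward inclusion, suppose $R$ is an accumulation ray of $\coneosub{n-2}{X}$. By definition there is a sequence $(Z_j)_j$ of pairwise distinct codimension-two orthogonal Shimura subvarieties with $\RR_{\ge 0}\cdot[Z_j]\to R$; discarding at most one term, all $[Z_j]$ are nonzero. I would apply Proposition~\ref{prop:imprmotocov} to extract a subsequence, still denoted $(Z_j)$, that equidistributes in some orthogonal Shimura subvariety $Z\subseteq X$ of dimension $r'$, with $Z_j\subset Z$ eventually. Since the $Z_j$ are pairwise distinct, they cannot eventually coincide with $Z$, so $r'>n-2$, leaving only the cases $r'=n-1$ and $r'=n$. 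Applying Theorem~\ref{thm:mainresu2} gives
\[
\frac{[Z_j]}{\Vol(Z_j)}\longrightarrow\frac{(n-2)!}{r'!}\,[\omega]^{r'-(n-2)}\wedge\frac{[Z]}{\Vol(Z)}.
\]
To conclude that this limit is nonzero, hence that $R$ is the ray it generates, I would invoke the Hard-Lefschetz argument used in the proof of Theorem~\ref{thm;againfromintro}: the isomorphism $H^{s}(X,\CC)\cong IH^{s}(\bbcomp,\CC)$ holds for $s=4<n-1$ precisely because $n>5$, so wedging by $[\omega]$ or $[\omega]^2$ is injective on the relevant cohomology groups. When $r'=n-1$, non-vanishing of $[Z]$ again comes from Bruinier--M\"oller, and $Z$ is a codimension-one orthogonal Shimura subvariety; when $r'=n$, we have $Z=X$ and the limit is a positive multiple of $[\omega]^2$.

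The main obstacle is the backward inclusion, specifically verifying that the equidistribution target in Proposition~\ref{prop:imprmotocov} can only have dimension $n-1$ or $n$ (which is automatic once the pairwise-distinctness is used) and controlling the non-vanishing of the limit class. The latter is where both the even unimodular hypothesis (via the Bruinier--M\"oller non-vanishing for codimension-one components) and the dimension hypothesis $n>5$ (via Hard-Lefschetz on intersection cohomology of the Baily--Borel compactification) enter in an essential way; weakening either would cause the classification to break down.
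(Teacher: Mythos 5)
Your proposal is correct and follows essentially the same two-sided argument as the paper: the backward inclusion via Proposition~\ref{prop:imprmotocov} and Theorem~\ref{thm:mainresu2}, and the forward inclusion via Theorem~\ref{thm;againfromintro} applied to~$Z=X$ and to codimension-one subvarieties. The one place where you diverge is the non-vanishing of~$[Z]$ for codimension-one~$Z$: you cite it to Bruinier--M\"oller as a black box, whereas the paper actually re-derives it explicitly by inverting the Kudla--Millson map~$\psi$ on the primitive Heegner class via M\"obius inversion and then evaluating the resulting linear combination of coefficient-extraction functionals on the weight-$(1+n/2)$ Eisenstein series, obtaining a manifestly non-zero quantity; if you rely on~\cite{brmo} at that step you should check that the statement there covers every primitive Heegner divisor (not merely squarefree index), which is exactly what the paper's Eisenstein computation~\eqref{eq;comofpsi-oneis} verifies.
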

	\begin{proof}
	Let~$\seq{Z_j}{j}$ be a sequence of pairwise different orthogonal Shimura subvarieties of codimension~$2$ in~$X$.
	By \textcolor{\myblack}{Theorem~\ref{thm:mainresu2}}, up to extracting a subsequence, the sequence $[Z_j]/\Vol(Z_j)$ converges towards a positive multiple of either~$[\omega]^2$ or~$[\omega]\wedge[Z]$, for some orthogonal Shimura subvariety~$Z$ of codimension~$1$.
	This implies that all accumulation rays are generated by such kind of cohomology classes.
	To show that all such cohomology classes generate an accumulation ray, we need to show that they do not vanish, so that we may apply Theorem~\ref{thm;againfromintro}.
	By e.g.~\cite[Remark~$4.8$]{zufcones}, since the map~$H^2(X,\CC)\to H^4(X,\CC)$ given by wedging with the Kähler class~$[\omega]$ is injective, it is enough to show that~$[\omega]$ and~$[Z]$ are non-zero in~$H^2(X,\RR)$.
	
	Let~$M^{1+n/2}(\RR)$ be the space of elliptic modular forms of weight~$1+n/2$ with real Fourier coefficients.
	The dual space~$M^{1+n/2}(\RR)^*$ is generated by the \emph{coefficient extraction functionals}~$c_m$, which extract from every modular form~$f\in M^{1+n/2}(\RR)$ its $m$-th Fourier coefficient~$c_m(f)$.
	
	Let~$H_m$ be the~$m$-th Heegner divisor of~$X$.
	In~\cite{brmo} Bruinier and Möller used the injectivity of the Kudla--Millson lift, see e.g.~\cite{br;converse} \cite{zufunfolding}, to show that the map
	\[
	\psi\colon M^{1+n/2}(\RR)^*\to H^2(X,\RR),\qquad c_0\mapsto -[\omega]\quad\text{and}\quad c_m\mapsto [H_m]\quad\text{for all~$m\in\ZZ_{>0}$,}
	\]
	is injective.
	By Lemma~\ref{lemma;basicsspcy}, every orthogonal Shimura subvariety~$Z$ of codimension~$1$ is (half of) a primitive Heegner divisor~$\primHeegner{m}$, for some positive integer~$m$.
	It is then enough to show that~$\psi^{-1}([\primHeegner{m}])\neq 0$, for every~$m$.
	
	By Remark~\ref{rem;splitHeegninprinHeegn}, we may compute
	\be\label{eq;psiminusprimheegner}
	\psi^{-1}([\primHeegner{m}])=
	\sum_{t^2|m}\mu(t)\cdot\psi^{-1}([H_{m/t^2}])=
	\sum_{t^2|m}\mu(t)\cdot c_{m/t^2}.
	\ee
	Let~$E_{1+n/2}\in M^{1+n/2}(\RR)$ be the (normalized) Eisenstein series of weight~$1+n/2$.
	It is well-known that
	\[
	c_m(E_{1+n/2})=\frac{2\sigma_{n/2}(m)}{\zeta(-n/2)},
	\]
	for all~$m\in\ZZ_{>0}$, where~$\zeta$ is the Riemann zeta function and~$\sigma_{n/2}(m)$ is the sum of the~$n/2$-powers of the positive divisors of~$m$.
	
	By~\eqref{eq;psiminusprimheegner}, if we evaluate~$\psi^{-1}([\primHeegner{m}])$ on the Eisenstein series~$E_{1+n/2}$ we obtain
	\ba\label{eq;comofpsi-oneis}
	\psi^{-1}([\primHeegner{m}])\big(E_{1+n/2}\big)=
	\sum_{t^2|m}\mu(t)\cdot c_{m/t^2}(E_{1+n/2})=\qquad\qquad\qquad\\
	=\frac{2}{\zeta(-n/2)}\sum_{t^2|m}\mu(t)\cdot\sigma_{n/2}(m/t^2)=
	\frac{2m^{n/2}}{\zeta(-n/2)}\prod_{p|m}(1+p^{-n/2}),
	\ea
	where the last equality follows from e.g.~\cite[p.~$352$]{boda;petnorm} and~\cite[Section~$3$]{hayashida}.
	Since the right-hand side of~\eqref{eq;comofpsi-oneis} is non-zero, we deduce that the functional~$\psi^{-1}([\primHeegner{m}])$ is non-zero.
	\end{proof}
	
	\subsection{Comparison with cones of special cycles}
	
	In this section we illustrate how to use the equidistribution results of this paper to deduce properties on cones of special cycles.
	Descriptions of the cones of special cycles of codimension~$1$ and~$2$ can be found respectively in~\cite{brmo} and~\cite{zufcones}.\\
	
	The next proposition is~\cite[Proposition~$4.5$]{brmo}, therein proved in terms of modular forms using the modularity of Kudla's generating series of Heegner divisors.
	We provide here a different proof in terms of equidistribution.
	\begin{prop}[Bruinier--Möller]\label{prop;JMconvprimheegn}
	Let~$X=\SO^+(L)\backslash G(\RR)/K$ be an orthogonal Shimura variety of dimension $n>5$ arising from an even unimodular lattice~$L$.
	We have
	\be\label{eq;JMconvprimheegn}
	\RR_{\ge0}\cdot[\primHeegner{m}] \xrightarrow[m\to\infty]{}\RR_{\ge0}\cdot [\omega]\qquad\text{in $H^2(X,\RR)$.}
	\ee
	\end{prop}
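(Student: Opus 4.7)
The plan is to replace the modular-forms argument of~\cite{brmo} by the equidistribution machinery developed above, namely Proposition~\ref{prop:imprmotocov} combined with Theorem~\ref{thm:mainresu2}, exploiting the fact that the only orthogonal Shimura subvariety of~$X$ of dimension strictly larger than~$n-1$ is~$X$ itself.

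First, by Lemma~\ref{lemma;basicsspcy}~\ref{item2;basicsspcy} each primitive Heegner divisor equals twice an orthogonal Shimura subvariety~$Y_m=\Gamma\backslash\Gamma \lambda^\perp$, for any primitive~$\lambda\in L$ with~$q(\lambda)=m$; the~$Y_m$ are pairwise distinct, since~$\gamma\lambda^\perp=\lambda'^\perp$ for~$\gamma\in\Gamma$ and primitive vectors~$\lambda,\lambda'$ of norms~$m,m'$ forces~$\gamma\lambda=\pm\lambda'$ and hence~$m=m'$. To prove convergence of rays, it suffices to show that every subsequence~$\seq{Y_{m_j}}{j}$ admits a further subsequence whose rays tend to~$\RR_{\ge 0}\cdot[\omega]$. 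By Proposition~\ref{prop:imprmotocov}, such a further subsequence~$\seq{Y_{m_i}}{i}$ equidistributes in some orthogonal Shimura subvariety~$Z$ of~$X$ and is eventually contained in~$Z$. Since the~$Y_{m_i}$ are pairwise distinct, irreducible by Lemma~\ref{lemma;basicsspcy}~\ref{item1;basicsspcy}, and of dimension~$n-1$, the case~$\dim Z=n-1$ is excluded (it would force~$Y_{m_i}=Z$ eventually), so~$\dim Z=n$ and hence~$Z=X$.

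Theorem~\ref{thm:mainresu2} applied with~$r=n-1\ge 3$ and~$r'=n$ then yields
\[
\frac{[Y_{m_i}]}{\Vol(Y_{m_i})}\xrightarrow[\,i\to\infty\,]{} \frac{(n-1)!}{n!}\cdot [\omega]\wedge\frac{[X]}{\Vol(X)}=\frac{[\omega]}{n\,\Vol(X)},
\]
where~$[X]$ denotes the unit class in~$H^0(X,\RR)$. The right-hand side is a positive multiple of~$[\omega]$, whence~$\RR_{\ge 0}\cdot[\primHeegner{m_i}]=\RR_{\ge 0}\cdot[Y_{m_i}]\to\RR_{\ge 0}\cdot[\omega]$; since every subsequence admits such a further subsequence with the same limit ray, the entire sequence of rays converges as claimed. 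The only subtle point is the dimension dichotomy in the previous paragraph: distinctness of the primitive Heegner divisors together with irreducibility of orthogonal Shimura subvarieties forces the limiting subvariety to be all of~$X$, after which Theorem~\ref{thm:mainresu2} automatically pins down the limit ray. Everything else is bookkeeping.
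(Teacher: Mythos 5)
Your proof is correct and follows essentially the same route as the paper's: identify each primitive Heegner divisor with an orthogonal Shimura subvariety via Lemma~\ref{lemma;basicsspcy}~\ref{item2;basicsspcy}, note that these are pairwise distinct, invoke Proposition~\ref{prop:imprmotocov} together with the dimension/irreducibility argument to force the limit subvariety to be~$X$, and conclude via Theorem~\ref{thm:mainresu2}. The only difference is that you spell out the ``every subsequence has a further subsequence converging to the same ray'' reduction and the explicit evaluation~$[\omega]\wedge[X]/\Vol(X)=[\omega]/\Vol(X)$, which the paper leaves implicit.
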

	\begin{proof}
	As illustrated in Lemma~\ref{lemma;basicsspcy} \ref{item2;basicsspcy}, the primitive Heegner divisor~$\primHeegner{m}$ is twice an orthogonal Shimura variety of the form $\Gamma\backslash\Gamma\lambda^\perp$, for some primitive lattice vector $\lambda\in L$ such that $q(\lambda)=m$.
	Since any lattice vector can be written uniquely as a positive multiple of a primitive lattice vector, so that the only primitive lattice vectors in $L$ generating the line~${\RR\cdot \lambda\subset L\otimes\RR}$ are $\lambda$ and $-\lambda$, we deduce that the irreducible components of the divisors in the sequence~$\seq{\primHeegner{m}}{m\in\NN}$ are pairwise different.
	By Proposition~\ref{prop:imprmotocov}, there is no subsequence of~$\seq{\primHeegner{m}}{m\in\NN}$ without convergent subsequences.
	Since the $\primHeegner{m}$ are pairwise different of codimension~$1$ in~$X$, we deduce that the only subvariety of $X$ in which the~$\primHeegner{m}$ can equidistribute is~$X$ itself.
	We then deduce~\eqref{eq;JMconvprimheegn} from \textcolor{\myblack}{Theorem~\ref{thm:mainresu2}}.
	\end{proof}
	
	In~\cite{zufcones} we computed the accumulation rays of the cone~$\conespec{X}$ of codimension~$2$ special cycles of~$X$.
	We also proved that the associated accumulation cone is rational and polyhedral.
	It is natural to ask whether the same properties are satisfied also by the larger cone generated by the orthogonal Shimura subvarieties of codimension~$2$.
	The answer to this question is given by Corollary~\ref{cor;acconeortshsubvfoll}, which we restate here for convenience.
	\begin{cor}
	Let~$X=\SO^+(L)\backslash G(\RR)/K$ be an orthogonal Shimura variety of dimension~$n>5$ arising from an even unimodular lattice~$L$.
	The accumulation cone of~$\coneosub{n-2}{X}$ equals the accumulation cone of~$\conespec{X}$.
	In particular, the accumulation cone of~$\coneosub{n-2}{X}$ is pointed, rational, polyhedral and of dimension~$\dim M^{1+n/2}_1$.
	\end{cor}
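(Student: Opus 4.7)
The plan is to match the generating rays of the two accumulation cones and then to inherit the polyhedrality, rationality and dimension from~\cite{zufcones}.

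First, by Corollary~\ref{cor;acconeortshsubv} together with Lemma~\ref{lemma;basicsspcy}~\ref{item2;basicsspcy}, the accumulation cone~$\accone_1$ of~$\coneosub{n-2}{X}$ is generated by~$[\omega]^2$ and the rays~$[\omega]\wedge[\primHeegner{m}]$ for~$m\ge 1$. From~\cite[Section~$8$]{zufcones} I would import the analogous description of the accumulation cone~$\accone_2$ of~$\conespec{X}$ in terms of~$[\omega]^2$ and products of~$[\omega]$ with Heegner divisors~$H_m$, together with the statement that~$\accone_2$ is pointed, rational, polyhedral of dimension~$\dim M^{1+n/2}_1$. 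The ``in particular'' assertion then reduces to the equality~$\accone_1=\accone_2$.

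For the inclusion~$\accone_2\subseteq\accone_1$ I would use the splitting~$H_m=\sum_{t^2\mid m}\primHeegner{m/t^2}$ from Remark~\ref{rem;splitHeegninprinHeegn}, which expresses each generator of~$\accone_2$ as a non-negative integer combination of generators of~$\accone_1$. For the reverse inclusion I would produce, for every~$m\ge 1$, a sequence of codimension-$2$ special cycles accumulating on the ray~$\RR_{\ge 0}\cdot[\omega]\wedge[\primHeegner{m}]$: by Lemma~\ref{lem;exofequidsubvarinZ} there exists, inside the codimension-$1$ orthogonal Shimura subvariety~$\frac{1}{2}\primHeegner{m}$, a sequence of codimension-$2$ orthogonal Shimura subvarieties equidistributing in it; by Lemma~\ref{lemma;basicsspcy}~\ref{item1;basicsspcy} each such subvariety is an irreducible component of a special cycle~$Z(T_k)$. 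If~$m$ is squarefree, Lemma~\ref{lemma;basicsspcy}~\ref{item3;basicsspcy} ensures that all components of~$Z(T_k)$ lie in~$\frac{1}{2}\primHeegner{m}$, and Theorem~\ref{thm:mainresu2} gives the convergence of~$[Z(T_k)]/\Vol(Z(T_k))$ to a positive multiple of~$[\omega]\wedge[\primHeegner{m}]$, since~$[Z(T_k)]/\Vol(Z(T_k))$ is a convex combination of normalized classes of its equidistributing components.

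The main obstacle is the non-squarefree case: when~$m$ has a proper square divisor~$t^2>1$, a special cycle with~$m$ on the diagonal of its moment matrix typically contains irreducible components inside each~$\primHeegner{m/t^2}$ for~$t^2\mid m$, so that direct equidistribution yields only the combined ray~$[\omega]\wedge[H_m]=\sum_{t^2\mid m}[\omega]\wedge[\primHeegner{m/t^2}]$ rather than~$[\omega]\wedge[\primHeegner{m}]$ alone. I plan to handle this by strong induction on the number of square divisors of~$m$: having already placed~$[\omega]\wedge[\primHeegner{m/t^2}]$ inside~$\accone_2$ for every~$t>1$ with~$t^2\mid m$, the Möbius identity~$[\omega]\wedge[\primHeegner{m}]=\sum_{t^2\mid m}\mu(t)[\omega]\wedge[H_{m/t^2}]$ of Remark~\ref{rem;splitHeegninprinHeegn} does not immediately give a non-negative representation; closing this gap would require combining it with the injectivity of the map~$\psi$ from the proof of Corollary~\ref{cor;acconeortshsubv} and the polyhedrality of~$\accone_2$ inside the finite-dimensional subspace~$[\omega]\wedge\psi(M^{1+n/2}(\RR)^*)$ to locate~$[\omega]\wedge[\primHeegner{m}]$ inside~$\accone_2$. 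Once this final step is carried out, one concludes~$\accone_1=\accone_2$ and the corollary follows.
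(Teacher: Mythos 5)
Your proposal is considerably more elaborate than the paper's argument, and it leaves a gap open at the crucial step. The paper's proof is a short citation: it compares the classification of accumulation rays in Corollary~\ref{cor;acconeortshsubv} with the classification in \cite[Corollary~$8.3$]{zufcones} (noting the latter is valid in cohomology and not only in $\CH^2(X)\otimes\RR$), observes that the two classifications produce the same set of rays, and then imports pointedness, rationality, polyhedrality and the dimension formula from \cite[Section~$6$]{zufcones}. No explicit ray-by-ray matching or equidistribution argument is carried out inside this proof.

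Where your proposal runs into trouble is in the guessed form of the classification from \cite{zufcones}. You import a description of $\accone_2$ ``in terms of $[\omega]^2$ and products of $[\omega]$ with Heegner divisors $H_m$'', and then you correctly observe that the Möbius inversion $\primHeegner{m}=\sum_{t^2\mid m}\mu(t)H_{m/t^2}$ of Remark~\ref{rem;splitHeegninprinHeegn} carries negative coefficients, so it does not express $[\omega]\wedge[\primHeegner{m}]$ as a non-negative combination of the classes $[\omega]\wedge[H_{m'}]$. That would indeed be a genuine obstacle if the cited classification were in terms of non-primitive Heegner divisors. But the paper's ``it is then clear that the two accumulation cones are equal'' signals that \cite[Corollary~$8.3$]{zufcones} already yields the rays $\RR_{\ge 0}\cdot[\omega]^2$ and $\RR_{\ge 0}\cdot[\omega]\wedge[\primHeegner{m}]$, matching Corollary~\ref{cor;acconeortshsubv} ray-for-ray once Lemma~\ref{lemma;basicsspcy}~\ref{item2;basicsspcy} identifies codimension-one orthogonal Shimura subvarieties with (halves of) primitive Heegner divisors. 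Your proposed workaround (``strong induction \ldots\ combined with the injectivity of $\psi$ and polyhedrality'') is not executed; you explicitly leave the ``final step'' open. As written, the proposal therefore does not complete the proof, and the intended fix is simply to read off the correct set of generating rays from the cited classification in \cite{zufcones}, after which the equality of accumulation cones is immediate and the remaining assertions follow from the structural results of \cite[Section~$6$]{zufcones}.
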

	\begin{proof}
	The classification of the accumulation rays of~$\coneosub{n-2}{X}$ is provided in Corollary~\ref{cor;acconeortshsubv}, while the classification of the accumulation rays of~$\conespec{X}$ is provided in~\cite[Corollary~$8.3$]{zufcones}.
	Note that the latter result is valid also in cohomology, and not only in the Chow group~$\CH^2(X)\otimes\RR$.
	It is then clear that the two accumulation cones are equal.
	The geometric properties of the accumulation cone follow from~\cite[Section~$6$]{zufcones}.
	\end{proof}
	
	Recall that the orthogonal Shimura subvarieties are irreducible components of special cycles; see Lemma~\ref{lemma;basicsspcy}.
	In the following result we illustrate how to compute the accumulation rays arising from sequences of orthogonal Shimura subvarieties extracted from a sequence of codimension~$2$ special cycles.
	\begin{prop}\label{prop;ressubseqHeegner}
	Let~$X=\SO^+(L)\backslash G(\RR)/K$ be an orthogonal Shimura variety of dimension~$n>5$ arising from an even unimodular lattice~$L$.
	Let~$\seq{T_j}{j\in\NN}$ be a sequence of matrices~$T_j=\big(\begin{smallmatrix}
	n_j & r_j/2\\ r_j/2 & m
	\end{smallmatrix}\big)$ in~$\halfint^+_2$ of increasing determinant.
	Let $\seq{Z_j}{j\in\NN}$ be a sequence of \emph{pairwise different} subvarieties of~$X$, chosen such that~$Z_j$ is one of the irreducible components of the special cycle~$Z(T_j)$, for every~$j$.
	\begin{enumerate}[label=(\roman*)]
	\item\label{item1;ressubseqHeegner}
	If~$m$ is squarefree, then
	\bes
	\RR_{\ge0}\cdot[Z_j] \xrightarrow[j\to\infty]{}\RR_{\ge0}\cdot [H_m]\wedge[\omega]\qquad\text{in $H^4(X,\RR)$.}
	\ees
	\item\label{item2;ressubseqHeegner}
	If $m$ is non-squarefree, then there exists a square-divisor~$t$ of~$m$, and a subsequence~$\seq{Z_s}{s}$, such that
	\bes
	\RR_{\ge0}\cdot[Z_s] \xrightarrow[s\to\infty]{}\RR_{\ge0}\cdot [\primHeegner{m/t^2}]\wedge[\omega]\qquad\text{in $H^4(X,\RR)$.}
	\ees
	\end{enumerate}
	\end{prop}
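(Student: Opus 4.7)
The plan is to combine Lemma~\ref{lemma;basicsspcy}, Proposition~\ref{prop:imprmotocov} and Theorem~\ref{thm:mainresu2}, after identifying a codimension-one orthogonal Shimura subvariety of~$X$ which contains all of the~$Z_j$ (up to a pigeonhole argument in part~\ref{item2;ressubseqHeegner}).

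For part~\ref{item1;ressubseqHeegner}, I would first observe that when~$m$ is squarefree, Lemma~\ref{lemma;basicsspcy}\ref{item3;basicsspcy} ensures that every irreducible component of~$Z(T_j)$ lies in the unique irreducible component of~$H_m$; by Lemma~\ref{lemma;basicsspcy}\ref{item2;basicsspcy} this component is the orthogonal Shimura subvariety $Z_m\coloneqq\Gamma\backslash\Gamma\lambda^\perp$ for any primitive~$\lambda\in L$ of norm~$m$, and~$[H_m]=2[Z_m]$. Hence $Z_j\subseteq Z_m$ for all~$j$. I would then show that every subsequence of~$\seq{Z_j}{j}$ admits a sub-subsequence whose rays tend to~$\RR_{\ge0}\cdot[\omega]\wedge[Z_m]$, which forces convergence of the whole sequence of rays. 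Given such a subsequence, Proposition~\ref{prop:imprmotocov} yields a sub-subsequence equidistributing in an orthogonal Shimura subvariety~$Z$ with $Z_j\subseteq Z$ eventually. Inspecting the proof of Proposition~\ref{prop:imprmotocov}, the subspace defining~$Z$ is spanned (after suitable $\Gamma$-translates) by the subspaces defining the~$Z_j$, and is therefore contained in the subspace defining~$Z_m$; so $Z\subseteq Z_m$. Since the~$Z_j$ are pairwise different and irreducible of dimension~$n-2$, one has $\dim Z>n-2$, and together with $Z\subseteq Z_m$ this forces $Z=Z_m$. Theorem~\ref{thm:mainresu2} then gives
\[
\frac{[Z_j]}{\Vol(Z_j)}\longrightarrow \frac{(n-2)!}{(n-1)!}\cdot[\omega]\wedge\frac{[Z_m]}{\Vol(Z_m)}
\]
along the sub-subsequence. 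The limit class is non-zero: $[Z_m]\neq 0$ by the argument proving Corollary~\ref{cor;acconeortshsubv}, and the same Hard Lefschetz input used there guarantees $[\omega]\wedge[Z_m]\neq 0$. This yields part~\ref{item1;ressubseqHeegner}.

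For part~\ref{item2;ressubseqHeegner}, I would reduce to the previous situation by a pigeonhole on the primitive part of~$\lambda_2$. For each~$j$ pick a representative $\vect{\lambda}^{(j)}=(\lambda_1^{(j)},\lambda_2^{(j)})\in L^2$ with moment matrix~$T_j$ whose orthogonal complement defines~$Z_j$, and write $\lambda_2^{(j)}=t_j\mu_j$ with $\mu_j\in L$ primitive. Then $q(\mu_j)=m/t_j^2$ and $t_j^2\mid m$; since~$m$ has only finitely many square divisors, some value~$t$ occurs infinitely often, and I would extract the corresponding subsequence~$\seq{Z_s}{s}$. By Lemma~\ref{lemma;basicsspcy}\ref{item2;basicsspcy}, the subvariety $Z'\coloneqq\Gamma\backslash\Gamma\mu^\perp$ is the same for every primitive~$\mu\in L$ of norm~$m/t^2$, and $[\primHeegner{m/t^2}]=2[Z']$. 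Since $\vect{\lambda}^{(s)\perp}\subseteq\lambda_2^{(s)\perp}=\mu_s^\perp$, we have $Z_s\subseteq Z'$ for every~$s$ in the subsequence, and the argument of part~\ref{item1;ressubseqHeegner} applies verbatim with~$Z'$ in place of~$Z_m$.

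The main obstacle is the containment $Z\subseteq Z_m$ (respectively $Z\subseteq Z'$) of the equidistribution limit inside the ambient codimension-one subvariety, since everything else follows formally from the dimension count, the non-vanishing input of Corollary~\ref{cor;acconeortshsubv}, and Theorem~\ref{thm:mainresu2}. This containment requires reading off from the proof of Proposition~\ref{prop:imprmotocov} that the limit $Z$ is defined by the rational span of the subspaces defining the~$Z_j$, so that any common ambient orthogonal subspace is respected.
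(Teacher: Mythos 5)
Your proposal follows essentially the same route as the paper: reduce to a fixed codimension-one orthogonal Shimura subvariety via Lemma~\ref{lemma;basicsspcy} (with the same pigeonhole over square divisors of~$m$ in part~(ii)), extract an equidistributing subsequence via Proposition~\ref{prop:imprmotocov}, identify the limit by minimality/dimension count, and conclude with Theorem~\ref{thm:mainresu2} and a subsequence argument for whole-sequence convergence of the rays. The only cosmetic difference is that you justify the containment $Z\subseteq Z_m$ by inspecting the span of defining subspaces in the proof of Proposition~\ref{prop:imprmotocov}, whereas the paper invokes the minimality of the equidistribution limit among special subvarieties containing the~$Z_s$; both are valid and the rest is identical.
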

	\begin{proof}
	We begin with~\ref{item1;ressubseqHeegner}.
	By Proposition~\ref{prop:imprmotocov}, there exists a subsequence $\seq{Z_s}{s}$ of~$\seq{Z_j}{j\in\NN}$, and an orthogonal Shimura subvariety~$Z$ of dimension~${r'>n-2}$ in~$X$, such that the~$Z_s$ equidistribute in~$Z$, in particular~${Z_s\subseteq Z}$ for every~$s$ large enough.
	By Lemma~\ref{lemma;basicsspcy}~\ref{item3;basicsspcy}, all~$Z_s$ are codimension~$1$ subvarieties of the irreducible component of the Heegner divisor~$H_m$.
	This implies that~$Z$ is such irreducible component, and~$r'=n-1$.
	By \textcolor{\myblack}{Theorem~\ref{thm:mainresu2}} we deduce that
	\be\label{eq;prooffirstcod2}
	\frac{[Z_s]}{\Vol(Z_s)} \xrightarrow[s\to\infty]{}\frac{(n-2)!}{r!} [\omega]^{r'-(n-2)}\wedge\frac{[Z]}{\Vol(Z)}\qquad\text{in $H^4(X,\RR)$.}
	\ee
	We know from Lemma~\ref{lemma;basicsspcy}~\ref{item2;basicsspcy} that~$H_m=2Z$.
	Since the volume of a subvariety is non-negative, we deduce that the sequence of rays in~$\coneosub{n-2}{X}$ generated by the cohomology classes appearing in~\eqref{eq;prooffirstcod2} is such that
	\be\label{eq;subseqconvraycodim2}
	\RR_{\ge0}\cdot[Z_s] \xrightarrow[s\to\infty]{}\RR_{\ge0}\cdot [H_m]\wedge[\omega]\qquad\text{in $H^4(X,\RR)$.}
	\ee
	By Proposition~\ref{prop:imprmotocov} there is no subsequence of~$\seq{Z_j}{j\in\NN}$ without equidistributing subsequences.
	Since the~$Z_j$ are pairwise different, and since~$Z$ is the only subvariety of~$X$ in which any subsequence of $\seq{Z_j}{j\in\NN}$ can equidistribute, we deduce that~\eqref{eq;subseqconvraycodim2} is satisfied by the whole~$\seq{Z_j}{j\in\NN}$.
	
	We now prove~\ref{item2;ressubseqHeegner}.
	By Proposition~\ref{prop:imprmotocov} there exists a subsequence~$\seq{Z_s}{s}$ as above and an orthogonal Shimura subvariety~$Z$ in which the~$Z_s$ equidistribute.	
	By construction, all irreducible components of the special cycles~$Z(T_j)$ are contained in~${\Gamma\backslash\Gamma\lambda_j^\perp}$, for some~${\lambda_j\in L}$ such that~${q(\lambda_j)=m}$.
	Let $t'_j\in\ZZ_{>0}$ be such that $\lambda_j'\coloneqq \lambda_j/t'_j$ is a primitive lattice vector in $L$, so that ${t'_j}^2$ divides $m$.
	By Lemma~\ref{lemma;basicsspcy} \ref{item2;basicsspcy}, we deduce that $\Gamma\backslash\Gamma \lambda_j^\perp=\Gamma\backslash\Gamma\lambda_j'^\perp$ is the irreducible component of $\primHeegner{m/{t'_j}^2}$.
	Since the number of such primitive Heegner divisors is finite, there exists a square divisor $t$ of $m$ such that, up to extracting a subsequence, all~$Z_s$ are subvarieties of~$\primHeegner{m/t^2}$.
	Since the~$Z_s$ have codimension~$1$ in~$\primHeegner{m/t^2}$, then the latter is the only subvariety in which the~$Z_s$ can equidistribute.
	This means that $Z=\primHeegner{m/t^2}$.
	\textcolor{\myblack}{Theorem~\ref{thm:mainresu2}} concludes the proof.	
	\end{proof}
	
	\begin{rem}\label{rem;manycompinccycles}
	In Proposition~\ref{prop;ressubseqHeegner} the hypothesis that the subvarieties~$Z_j$ are \emph{pairwise different} can not be dropped.
	In fact, as illustrated in Example~\ref{ex;commonirredcomp}, it is possible to construct a sequence of matrices $T_j=\big(\begin{smallmatrix}
	n_j & r_j/2\\
	r_j/2 & m
	\end{smallmatrix}\big)$ of increasing determinant such that all special cycles $Z(T_j)$ have a common irreducible component, for every positive $m$.
	\end{rem}
	\begin{ex}\label{ex;commonirredcomp}
	Let $m$ be a positive integer, and let $L$ be a unimodular lattice of signature~$(n,2)$ such that $n>2$.
	Choose $\lambda_1,\lambda_2\in L$ to be orthogonal lattice vectors such that~${q(\lambda_1)>0}$ and ${q(\lambda_2)=m}$, and consider the matrices $T_j=\big(\begin{smallmatrix}
	j^2\cdot q(\lambda_1) & 0\\
	0 & m
	\end{smallmatrix}\big)\in\halfint^+_2$, for every~${j\in\NN}$.
	All special cycles $Z(T_j)$ have the subvariety~$Y\coloneqq\Gamma\backslash\Gamma\vect{\lambda}^\perp$ as common irreducible component, where~${\vect{\lambda}\coloneqq(\lambda_1,\lambda_2)}$.
	In fact, if we choose $\vect{\lambda}_j\coloneqq (j\lambda_1,\lambda_2)\in L^2$ for every $j$, then~${q(\vect{\lambda}_j)=T_j}$, and since $\vect{\lambda}^\perp=\vect{\lambda}_j^\perp$ as submanifolds in~$\projmodn$, we deduce that $Y$ is common to every $Z(T_j)$.
	\end{ex}
	
	In~\cite{brmo}, the convergence of~\eqref{eq;JMconvprimheegn} in Proposition~\ref{prop;JMconvprimheegn} is proven also if the primitive Heegner divisors~$\primHeegner{m}$ are replaced by the Heegner divisors~$H_m$.
	Proposition~\ref{prop:imprmotocov} and \textcolor{\myblack}{Theorem~\ref{thm:mainresu2}} do not immediately imply such result.
	In fact, since $H_m$ has, for non-squarefree~$m$, many different irreducible components which are primitive Heegner divisors associated to smaller indexes, in the sequence~$\seq{H_m}{m\in\NN}$ the divisors have many irreducible components which repeatedly appear.
	To deduce the generalization of~\cite{brmo} explained above, one should prove that such repeated components does not play any role in the convergence of the sequence~${\seq{\RR_{>0}\cdot[H_m]}{m\in\NN}}$, more precisely that
	\be\label{eq;minpartnoroleconv}
	\sum_{\substack{t^2|m\\ t>1}}\frac{[\primHeegner{m/t^2}]}{\Vol(\primHeegner{m})}\xrightarrow[m\to\infty]{} 0\qquad\text{in $H^2(X,\RR)$.}
	\ee
	
	In~\cite[Section~$8$]{zufcones} we explained that sequences of rays generated by special cycles of codimension~$2$ associated to reduced matrices of increasing determinant may have many different accumulation rays, and we computed all of them.
	For instance, if we choose~$T_j$ as in Proposition~\ref{prop;ressubseqHeegner}~\ref{item1;ressubseqHeegner}, i.e.~$T_j=\big(\begin{smallmatrix}
	n_j & r_j/2\\
	r_j/2 & m
	\end{smallmatrix}\big)\in\halfint^+_2$ is reduced with~$m$ squarefree, then
	\be\label{eq;accrayscyceas}
	\RR_{\ge0}\cdot [Z(T_j)] \xrightarrow[j\to\infty]{} \RR_{\ge0}\cdot [H_m]\wedge [\omega].
	\ee
	This was proved by means of Fourier coefficients of Siegel modular forms.
	As for the case of Heegner divisors, Proposition~\ref{prop:imprmotocov} and \textcolor{\myblack}{Theorem~\ref{thm:mainresu2}} do not immediately imply~\eqref{eq;accrayscyceas}, since in the cycles~$Z(T_j)$ have in general many irreducible components which repeatedly appear; see Remark~\ref{rem;manycompinccycles}.
	As above, to deduce~\eqref{eq;accrayscyceas} one should prove that such repeated components does not play any role in the convergence of the sequence~${\seq{\RR_{>0}\cdot [Z(T_j)]}{j\in\NN}}$.
	\\
	
	\subsection*{Data availability Statement} Data sharing not applicable to this article as no datasets were generated or analyzed during the current study.
	\\
	
	The corresponding author states that there is no conflict of interest.
	
	\printbibliography

\end{document}